\definecolor{Red}{cmyk}{0,1,1,0}
\definecolor{verde}{cmyk}{1,0,1,0}
\definecolor{loka}{cmyk}{.5,0,1,.5}
\definecolor{azul}{cmyk}{1,1,0,0}
\numberwithin{equation}{section}
\newcommand{\be}{\begin{equation}}
\newcommand{\ee}{\end{equation}}
\newtheorem{theorem}{Theorem}
\newtheorem{lemma}{Lemma}
\newtheorem{axiom}{Axiom}
\newtheorem{obs}{Observation}
\begin{document}
\title{A new fractional derivative of variable order with non-singular kernel and fractional differential equations}
\author{J. Vanterler da C. Sousa$^1$}
\address{$^1$ Department of Applied Mathematics, Institute of Mathematics,
 Statistics and Scientific Computation, University of Campinas --
UNICAMP, rua S\'ergio Buarque de Holanda 651,
13083--859, Campinas SP, Brazil\newline
e-mail: {\itshape \texttt{ra160908@ime.unicamp.br, capelas@ime.unicamp.br }}}

\author{E. Capelas de Oliveira$^1$}

\begin{abstract} In this paper, we introduce two new non-singular kernel fractional derivatives and present a class of other fractional derivatives derived from the new formulations. We present some important results of uniformly convergent sequences of continuous functions, in particular the Comparison's principle, and others that allow, the study of the limitation of fractional nonlinear differential equations.

\vskip.5cm
\noindent
\emph{Keywords}: Fractional derivative of variable order, Non-singular kernel, Comparison's principle, Fractional differential equation
\newline 
MSC 2010 subject classifications. 26A33; 33BXX; 34A07 .
\end{abstract}
\maketitle


\section{Introduction} 
The topic of fractional calculus is currently one of the most studied, not only by mathematicians, but by physicists, chemists, and various engineers, among others, for their many applications in modeling real phenomena. In fact, when we consider non-integer order derivatives, we have been able, in some studies, to better adapt the theoretical model to the experimental data, thus predicting better the future dynamics of the process \cite{CM,KDI,ELM,HER,RMK,Ze,F5,F6}. A problem that arises in this study is the innumerable definitions of fractional derivative operators, and with this, the choice of the best operator for the case under study. One way to overcome this problem is to consider more general definitions, where the usual ones are particular cases. In this sense, there are currently three branches of fractional derivatives: the fractional derivatives with singular kernel, the so-called "traditional" fractional derivatives, among which we mention: $\psi-$Caputo, $\psi-$Riemann-Liouville, $\psi-$Hilfer, Hadamard, Caputo-Hadamard, Riesz, among others \cite{Ze1,Ze2,RCA,AHMJ}.

On the other hand, recently, two new classes of fractional derivatives have appeared by means of limit, first of all, that said local, we mention some formulations: conformable fractional derivative, $\mathcal{V}-$fractional derivative, $M-$derivative, among others \cite{Ze2,Ze5,RMAM,UNK}.

As we have seen, the number of formulations of fractional derivatives with singular kernel or by limit is considerable, however, it does not make it possible to remedy the diversity of existing problems. In this sense, Caputo and Fabrizio \cite{cap}, proposed a new fractional derivative with non-singular kernel, admitting two different representations, temporal and spatial variables. The interest in this new approach is due to use a model that describes the behavior of classical viscoelastic materials, thermal media, electromagnetic systems, among others. Other fractional derivative formulations with non-singular kernel and applications can be found in references \cite{xiao1,xiao2,atang}.

The motivation for the realization of this paper comes from the $\psi-$Caputo and $\psi-$Riemann-Liouville fractional derivatives and the Riemann-Liouville fractional integral with respect to another function with variable order, in proposing a new fractional derivative with non-singular kernel in a general, way where the usual are particular cases \cite{xiao3}.

The paper is organized as follows: in section 2, we present two new formulations of fractional derivatives with non-singular kernels, by means of the fractional differentiation operators called $\psi-$Riemann-Liouville and $\psi-$Caputo, as well as we study their respective particular cases. We discuss two limit processes $\alpha(t) \rightarrow 1$ and $\alpha(t) \rightarrow 0$ and an observation is made regarding the class of possible fractional operators from the introduced of these new fractional operators. In section 3, we discuss results that guarantee the limitation of new fractional derivatives, as well as results involving uniformly convergent sequences of continuous functions. In this sense, we present and discuss estimates for fractional derivatives. In the section is introduced and proved the result of the fundamental comparison principle for the application of a nonlinear fractional differential equation. Concluding remarks close the paper.

\section{New fractional derivatives}
In this section, by means of the $\psi-$Caputo and $\psi-$Riemann-Liouville fractional derivatives of variable order with respect to another function, we present two new formulations of fractional derivatives of variable order with nonsingular kernel. In this sense, we present some particular cases through remarks and discuss two limiting process es$\alpha(t)\rightarrow 1$ and $\alpha(t)\rightarrow 0$.

We first, consider the one-parameter Mittag-Leffler function, given by \cite{GKAM}
\begin{equation*}
\mathbb{E}_{\beta }\left( z\right) =\overset{\infty }{\underset{k=0}{\sum }}\frac{z^{k}}{\Gamma \left( \beta k+1\right)}, \text{ } \textnormal{Re}(\beta)>0.
\end{equation*}

In particular, taking $\beta \rightarrow 1$, we have $\mathbb{E}_{\beta }\left(z\right) =\exp \left( z\right)$.

Consider the following function, given by
\begin{equation}\label{eq1}
\mathbb{H}_{\gamma ,\beta }^{\alpha \left( t\right) ;\psi }\left( t,\tau \right) :=\mathbb{E}_{\beta }\left[ \frac{-\alpha \left( t\right) \left( \psi \left( t\right) -\psi \left( \tau \right) \right) ^{\gamma }}{1-\alpha \left( t\right) }\right] 
\end{equation}
with $0<\alpha \left( t\right) <1$, $0<\beta,\gamma<1$ $\psi \left( \cdot \right) $ is a positive function and increasing monotone and $\mathbb{E}_{\beta }\left(\cdot\right) $ is a Mittag-Leffler function, which is considered uniformly convergent on the interval $[a,b]=I.$

The $\psi -$Riemann-Liouville fractional integral of variable order, $0<\alpha \left( t\right) <1$, is given by \cite{xiao3}
\begin{equation*}
I_{a+}^{\alpha \left( t\right) ;\psi }f\left( t\right) :=\frac{1}{\Gamma \left( \alpha \left( t\right) \right)}\int_{a}^{t}\psi ^{\prime }\left( \tau \right) \left( \psi \left( t\right) -\psi \left( \tau \right) \right) ^{\alpha \left( \tau \right) -1}f\left( \tau \right) d\tau .
\end{equation*}

The corresponding $\psi-$Riemann-Liouville fractional derivative of variable order $0<\alpha \left( t\right) <1$, is given by \cite{xiao3}
\begin{equation} \label{eq2}
^{RL}\mathcal{D}_{a+}^{\alpha \left( t\right) ;\psi }f\left( t\right) :=\frac{1}{\Gamma \left( 1-\alpha \left( t\right) \right) }\left( \frac{1}{\psi ^{\prime }\left( t\right) }\frac{d}{dt}\right) \int_{a}^{t}\psi ^{\prime }\left( \tau \right) \left( \psi \left( t\right) -\psi \left( \tau \right) \right) ^{-\alpha \left( t\right) }f\left( \tau \right) d\tau 
\end{equation}
with $0<\alpha \left( t\right) <1,$ $a\in \lbrack -\infty, t]$, $f\in H_{1}\left( a,b\right) $ and $b>a.$

On the other hand, the $\psi -$Caputo fractional derivative of variable order $0<\alpha \left( t\right) <1$ is \cite{xiao3}
\begin{equation}\label{eq3}
^{C}\mathcal{D}_{a+}^{\alpha \left( t\right) ;\psi }f\left( t\right) :=\frac{1}{\Gamma \left( 1-\alpha \left( t\right) \right) }\int_{a}^{t}\left( \psi \left( t\right) -\psi \left( \tau \right) \right) ^{-\alpha \left( t\right) }f^{\prime }\left( \tau \right) d\tau,
\end{equation}
with $0<\alpha \left( t\right) <1,$ $a\in \lbrack -\infty ,t],$ $f\in H_{1}\left( a,b\right) $ and $b>a.$

Second, by changing the kernel $\left( \psi \left( t\right) -\psi \left( \tau \right) \right) ^{-\alpha \left( t\right) }$ with the function $\mathbb{H}_{\gamma ,\beta }^{\alpha \left( t\right) ;\psi }\left( t,\tau \right) $ and $\dfrac{1}{\Gamma \left( 1-\alpha \left( t\right) \right) }$ with $\dfrac{M\left( \alpha \left( t\right) \right) }{1-\alpha \left( t\right) }$ in Eq.(\ref{eq2}) and Eq.(\ref{eq3}), we obtain the following two new definitions of time fractional derivative given by 
\begin{equation}\label{eq4}
^{RL}\mathfrak{D}_{a+}^{\alpha \left( t\right) ;\psi }f\left( t\right) :=\frac{M\left( \alpha \left( t\right) \right) }{1-\alpha \left( t\right) }\left( \frac{1}{\psi ^{\prime }\left( t\right) }\frac{d}{dt}\right) \int_{a}^{t}\psi ^{\prime }\left( \tau \right) \mathbb{H}_{\gamma ,\beta }^{\alpha \left( t\right); \psi }\left( t,\tau \right) f\left( \tau \right) d\tau 
\end{equation}
and
\begin{equation}\label{eq5} 
^{C}\mathfrak{D}_{a+}^{\alpha \left( t\right) ;\psi }f\left( t\right) :=\frac{M\left( \alpha \left( t\right) \right) }{1-\alpha \left( t\right) }\int_{a}^{t}\mathbb{H}_{\gamma ,\beta }^{\alpha \left( t\right) ;\psi }\left( t,\tau \right) f^{\prime }\left( \tau \right) d\tau 	
\end{equation}
where $M\left( \alpha \left( t\right) \right) $ is a normalization function such that $M\left( 0\right) =M\left( 1\right) =1,$ and $\psi \left( \cdot \right) $ is a positive function and increasing monotone, such that $\psi ^{\prime }\left( t\right) \neq 0.$ 

Next, we introduce an axiom that makes it possible to recover two limiting cases, specifically the integer-order cases.

\begin{axiom}\label{axioma} Let $0<\alpha \left( t\right) <1,$ $\gamma ,\beta \in \mathbb{R}$ and $\psi\in C^{1}[a,b] $ an increasing function such that $\psi ^{\prime }\left( t\right) \neq 0$ on $\left( a,b\right) $. Let $\mathbb{H}_{\gamma ,\beta }^{\alpha \left( t\right) ;\psi }\left(t,\tau \right) $ be a function as given by \textnormal{ Eq.(\ref{eq1})}, then
\begin{equation}\label{eq6}
\underset{\alpha \left( t\right) \rightarrow 1}{\lim }\frac{M\left( \alpha \left( t\right) \right) }{1-\alpha \left( t\right) }\left( \frac{1}{\psi ^{\prime }\left( t\right) }\frac{d}{dt}\right) \int_{a}^{t}\psi ^{\prime }\left( \tau \right) \mathbb{H}_{\gamma ,\beta }^{\alpha \left( t\right) ;\psi }\left( t,\tau \right) f\left( \tau \right) d\tau =f^{\prime }\left(
t\right) 
\end{equation}
and
\begin{equation}\label{eq7}
\underset{\alpha \left( t\right) \rightarrow 1}{\lim }\frac{M\left( \alpha \left( t\right) \right) }{1-\alpha \left( t\right) } \int_{a}^{t} \mathbb{H}_{\gamma ,\beta }^{\alpha \left( t\right) ;\psi }\left( t,\tau \right) f\left( \tau \right) d\tau =f^{\prime }\left(
t\right) .
\end{equation}
\end{axiom}

Now, observe that taking limit $\alpha \left( t\right) \rightarrow 1,$ on both sides of Eq.(\ref{eq4}), Eq.(\ref{eq5}) and by Axiom \ref{axioma}, i.e., Eq.(\ref{eq6}) and Eq.(\ref{eq7}), we have 
\begin{equation*}
\underset{\alpha \left( t\right) \rightarrow 1}{\lim }^{RL}\mathfrak{D}_{a+}^{\alpha \left( t\right) ;\psi }f\left( t\right) =f^{\prime }\left( t\right) 
\end{equation*}
and
\begin{equation*}
\underset{\alpha \left( t\right) \rightarrow 1}{\lim }^{C}\mathfrak{D}_{a+}^{\alpha \left( t\right) ;\psi }f\left( t\right) =f^{\prime }\left( t\right) .
\end{equation*}

Note that, in the limit $\alpha(t)\rightarrow 0$, we have
\begin{eqnarray}\label{eq8}
&&\underset{\alpha \left( t\right) \rightarrow 0}{\lim }\mathbb{H}_{\gamma ,\beta }^{\alpha \left( t\right) ;\psi }\left( t,\tau \right) \notag \\ &=&\underset{\alpha \left( t\right) \rightarrow 0}{\lim }\overset{\infty }{\underset{k=0}{\sum }}\dfrac{\left[ \alpha \left( t\right) \left( \psi \left( t\right) -\psi \left( a\right) \right) ^{\gamma }\right] ^{k}}{\left( 1-\alpha \left( t\right) \right) ^{k}\Gamma \left( \beta k+1\right) }  \notag \\ &=&\underset{\alpha \left( t\right) \rightarrow 0}{\lim }\left\{ 
\begin{array}{c}
1+\dfrac{\left[ \alpha \left( t\right) \left( \psi \left( t\right) -\psi \left( a\right) \right) ^{\gamma }\right] }{\left( 1-\alpha \left( t\right) \right)\Gamma \left( \beta +1\right) }+\dfrac{\left[ \alpha \left( t\right) \left( \psi \left( t\right) -\psi \left( a\right) \right) ^{\gamma }\right] ^{2}}{\left( 1-\alpha \left( t\right) \right) ^{2}\Gamma\left( 2\beta+1\right) } \\ 
+\dfrac{\left[ \alpha \left( t\right) \left( \psi \left( t\right) -\psi \left( a\right) \right) ^{\gamma }\right] ^{3}}{\left( 1-\alpha \left( t\right) \right) ^{3}\Gamma \left( 3\beta +1\right) }+\cdot \cdot \cdot +\dfrac{\left[ \alpha \left( t\right) \left( \psi \left( t\right) -\psi \left( a\right) \right) ^{\gamma }\right] ^{n}}{\left( 1-\alpha \left( t\right)
\right) ^{n}\Gamma \left( n\beta+1\right) }+\cdot \cdot \cdot  \notag
\end{array}
\right\} =1.\\
\end{eqnarray}

Taking the limit $\alpha \left( t\right) \rightarrow 0,$ on both sides of Eq.(\ref{eq4}) and by Eq.(\ref{eq8}), we obtain
\begin{eqnarray*}
&&\underset{\alpha \left( t\right) \rightarrow 0}{\lim }\text{ }^{RL}\mathfrak{D}_{a+}^{\alpha \left( t\right) ;\psi }f\left( t\right) \notag \\
&=&\underset{\alpha \left( t\right) \rightarrow 0}{\lim }\left\{ \frac{M\left( \alpha \left( t\right) \right) }{1-\alpha \left( t\right) }\left( \frac{1}{\psi ^{\prime }\left( t\right) }\frac{d}{dt}\right) \right\} \int_{a}^{t}\psi
^{\prime }\left( \tau \right) \underset{\alpha \left( t\right) \rightarrow 0} {\lim }\mathbb{H}_{\gamma ,\beta }^{\alpha \left( t\right) ;\psi }\left( t,\tau \right) f\left( \tau \right) d\tau  \\
&=&\left( \frac{1}{\psi ^{\prime }\left( t\right) }\frac{d}{dt}\right) \int_{a}^{t}\psi ^{\prime }\left( \tau \right) f\left( \tau \right) d\tau  \\ &=&f\left( t\right) .
\end{eqnarray*}

On the other hand, taking the limit $\alpha \left( t\right) \rightarrow 0,$ on both sides of Eq.(\ref{eq5}) and by Eq.(\ref{eq8}), we get
\begin{eqnarray*}
\underset{\alpha \left( t\right) \rightarrow 0}{\lim }\text{ } ^{C}\mathfrak{D}_{a+}^{\alpha \left( t\right) ;\psi }f\left( t\right)  &=&\underset{\alpha \left( t\right) \rightarrow 0}{\lim }\left\{ \frac{M\left( \alpha \left( t\right) \right) }{1-\alpha \left( t\right) }\right\} \int_{a}^{t}\underset{\alpha \left( t\right) \rightarrow 0}{\lim }\mathbb{H}_{\gamma ,\beta
}^{\alpha \left( t\right) ;\psi }\left( t,\tau \right) f^{\prime }\left( \tau \right) d\tau  \\
&=&\int_{a}^{t}f^{\prime }\left( \tau \right) d\tau  \\ &=&f\left( t\right) -f\left( a\right) .
\end{eqnarray*}

By means of the operators $^{RL}\mathfrak{D}_{a+}^{\alpha \left( t\right) ;\psi }\left( \cdot \right) $ and $^{C}\mathfrak{D}_{a+}^{\alpha \left( t\right) ;\psi }\left( \cdot \right) $, we recover some particular cases, as follow.

\begin{obs}\label{ob1} First, for the fractional derivative $^{RL}\mathfrak{D}_{a+}^{\alpha \left( t\right) ;\psi }\left( \cdot \right) ,$ we have the following particular cases:
\begin{enumerate}
\item Taking $\beta =\gamma =\alpha $ and $\psi \left( t\right) =t$ in \textnormal{Eq.(\ref{eq4})}, we have the fractional derivative, given by \textnormal{\cite{xiao2}}
\begin{equation*}
^{RL}\mathfrak{D}_{a+}^{\alpha \left( t\right) ;t}f\left( t\right)=\frac{M\left( \alpha(t) \right) }{1-\alpha(t) }\left( \frac{d}{dt}\right) \int_{a}^{t}\mathbb{E}_{\alpha }\left( -\frac{\alpha(t) \left( t-\tau \right) ^{\alpha }}{1-\alpha(t) }\right) f\left( \tau \right) d\tau \equiv \mathscr{D}_{a+}^{\alpha }f\left( t\right) .
\end{equation*}

\item Taking $\alpha \left( t\right) =\beta =\gamma =\alpha $ and $\psi \left( t\right) =t$ in \textnormal{Eq.(\ref{eq4})}, we have the so-called Atangana fractional derivative, given by \textnormal{\cite{atang}}
\begin{equation*}
^{RL}\mathfrak{D}_{a+}^{\alpha \left( t\right) ;t}f\left( t\right)=\frac{M\left( \alpha \right) }{1-\alpha }\left( \frac{d}{dt}\right) \int_{a}^{t}\mathbb{E}_{\alpha }\left( -\frac{\alpha \left( t-\tau \right) ^{\alpha }}{1-\alpha }\right)
f\left( \tau \right) d\tau \equiv\text{ }^{AT}\mathfrak{D}_{a+}^{\alpha }f\left( t\right) .
\end{equation*}

\item Taking $\alpha \left( t\right) =\alpha $ and $\beta $ $=\gamma =1$ and $\psi \left( t\right) =t$ in \textnormal{Eq.(\ref{eq4})}, we have the so-called Yang-Machado fractional derivative, given by \textnormal{\cite{xiao1}}
\begin{equation*}
^{RL}\mathfrak{D}_{a+}^{\alpha \left( t\right) ;t}f\left( t\right)=\frac{M\left( \alpha \right) }{1-\alpha }\left( \frac{d}{dt}\right) \int_{a}^{t}\exp \left( -\frac{\alpha \left( t-\tau \right) }{1-\alpha }\right) f\left( \tau \right) d\tau \equiv\text{ }^{YM}\mathcal{D}_{a+}^{\alpha }f\left( t\right) .
\end{equation*}

\item Taking $\alpha \left( t\right) =\alpha $, $\beta $ $=\gamma =1,$ $M\left( \alpha \right) =1$ and $\psi \left( t\right) =t$ in \textnormal{Eq.(\ref{eq4})}, we have the fractional derivative, given by \textnormal{\cite{xiao1}}
\begin{equation*}
^{RL}\mathfrak{D}_{a+}^{\alpha \left( t\right) ;t}f\left( t\right)=\frac{1}{1-\alpha }\left( \frac{d}{dt}\right) \int_{a}^{t}\exp \left( -\frac{\alpha \left( t-\tau \right) }{1-\alpha }\right) f\left( \tau \right) d\tau \equiv\mathcal{D}_{a+}^{\alpha }f\left( t\right) .
\end{equation*}

\item Note that, as $\psi \left( t\right) $ is an arbitrary function, if we take $\psi \left( t\right) =\ln t$, we obtain
\begin{equation*}
^{RL}\mathfrak{D}_{a+}^{\alpha \left( t\right) ;\ln t}f\left( t\right)=\frac{M\left( \alpha \left( t\right) \right) }{1-\alpha \left( t\right) }\left( t\frac{d}{dt}\right) \int_{a}^{t}\mathbb{E}_{\beta }\left( -\frac{\alpha \left( t\right) \ln \left( \frac{t}{\tau }\right) ^{\gamma }}{1-\alpha \left( t\right) }\right) f\left( \tau \right) \frac{d\tau}{\tau}.
\end{equation*}
\end{enumerate}
\end{obs}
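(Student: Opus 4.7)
The plan is to verify each of the five listed particular cases by direct substitution into the defining formula Eq.(\ref{eq4}), using the explicit form of the kernel $\mathbb{H}_{\gamma,\beta}^{\alpha(t);\psi}(t,\tau)$ given in Eq.(\ref{eq1}). Since Observation \ref{ob1} asserts nothing beyond parameter specialization, the argument is a sequence of elementary bookkeeping reductions rather than an analytic estimate: in each item I would write down Eq.(\ref{eq4}) with the indicated choice of $\gamma,\beta,\alpha(t),\psi$ and $M(\alpha(t))$, then collect terms to match the formula on the right-hand side.

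For items (1) and (2) I would set $\psi(t)=t$, so that $\psi'(t)=\psi'(\tau)=1$ and the prefactor $\frac{1}{\psi'(t)}\frac{d}{dt}$ collapses to $\frac{d}{dt}$; inserting $\gamma=\beta=\alpha$ into Eq.(\ref{eq1}) turns the kernel into $\mathbb{E}_\alpha\!\left[-\frac{\alpha(t)(t-\tau)^\alpha}{1-\alpha(t)}\right]$, which is precisely the formula recorded in (1). Item (2) then follows by additionally freezing $\alpha(t)\equiv\alpha$, which reproduces the Atangana expression of \cite{atang} verbatim.

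For items (3) and (4) I would invoke the reduction $\mathbb{E}_1(z)=\exp(z)$ noted immediately after the definition of the Mittag-Leffler function; with $\beta=\gamma=1$, $\alpha(t)\equiv\alpha$ and $\psi(t)=t$, the kernel becomes $\exp\!\left[-\frac{\alpha(t-\tau)}{1-\alpha}\right]$, giving item (3). Item (4) is then obtained from item (3) simply by imposing the normalization $M(\alpha)=1$, so no further computation is required.

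The only item that involves a genuine (though elementary) change of variables is (5), where $\psi(t)=\ln t$. Then $\psi'(t)=1/t$, so $\frac{1}{\psi'(t)}\frac{d}{dt}=t\frac{d}{dt}$, the measure $\psi'(\tau)\,d\tau$ becomes $d\tau/\tau$, and $\psi(t)-\psi(\tau)=\ln(t/\tau)$; substituting into Eq.(\ref{eq4}) reproduces the displayed formula. There is no real obstacle here since convergence of the Mittag-Leffler series on $I$ has already been assumed after Eq.(\ref{eq1}); the main care I would take is to keep the variable-order dependence $\alpha(t)$ (not $\alpha(\tau)$) inside the kernel straight across all five substitutions, and to confirm that the normalization hypothesis $M(0)=M(1)=1$ is used only in items (2)--(4) where specific constant-order choices are made.
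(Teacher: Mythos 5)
Your proposal is correct and matches what the paper does implicitly: Observation \ref{ob1} is stated as a remark with no written proof, the justification being exactly the direct substitution of the indicated parameter choices into Eq.(\ref{eq1}) and Eq.(\ref{eq4}) (together with $\mathbb{E}_1(z)=\exp(z)$ for items (3)--(4) and the change $\psi(t)=\ln t$ for item (5)) that you carry out. Your care in distinguishing the constant $\alpha$ from the variable order $\alpha(t)$ in items (1) versus (2) is the only point where a reader could slip, and you handle it correctly.
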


\begin{obs}\label{ob2} On the other hand, for the fractional derivative $^{C}\mathfrak{D}_{a+}^{\alpha \left( t\right) ;\psi }\left( \cdot \right) ,$ we have the following particular cases:

\begin{enumerate}
\item  For $\beta =\gamma =\alpha $ and $\psi \left( t\right) =t$ in \textnormal{Eq.(\ref{eq5})}, we have the fractional derivative, given by \textnormal{\cite{xiao2}}
\begin{equation*}
^{C}\mathfrak{D}_{a+}^{\alpha \left( t\right) ;t}f\left( t\right) =\frac{M\left( \alpha(t) \right) }{1-\alpha(t)}\int_{a}^{t}\mathbb{E}_{\alpha }\left( -\frac{\alpha(t)\left( t-\tau \right) ^{\alpha }}{1-\alpha }\right) f^{\prime }\left( \tau \right) d\tau \equiv\mathcal{D}_{a+}^{\alpha(t) }f\left( t\right) .
\end{equation*}

\item  For $\alpha \left( t\right) =\beta =\gamma =\alpha $ and $\psi \left( t\right) =t$ in \textnormal{Eq.(\ref{eq5})}, we have the so-called Atangana fractional derivative, given by \textnormal{\cite{atang}}
\begin{equation*}
^{C}\mathfrak{D}_{a+}^{\alpha \left( t\right) ;t}f\left( t\right) =\frac{M\left( \alpha \right) }{1-\alpha }\int_{a}^{t}\mathbb{E}_{\alpha }\left( -\frac{\alpha \left( t-\tau \right) ^{\alpha }}{1-\alpha }\right) f^{\prime }\left( \tau \right) d\tau \equiv\text{ }^{AT}\mathcal{D}_{a+}^{\alpha }f\left( t\right) .
\end{equation*}

\item For $\alpha \left( t\right) =\alpha $ and $\beta $ $=\gamma =1$ and $ \psi \left( t\right) =t$ in \textnormal{Eq.(\ref{eq5})}, we have the Caputo-Fabrizio fractional derivative, given by \textnormal{\cite{cap}}
\begin{equation*}
^{C}\mathfrak{D}_{a+}^{\alpha \left( t\right) ;t}f\left( t\right) =\frac{M\left( \alpha \right) }{1-\alpha }\int_{a}^{t}\exp \left( -\frac{\alpha \left( t-\tau \right) }{1-\alpha }\right) f^{\prime }\left( \tau \right) d\tau \equiv\text{ }^{CF}\mathcal{D}_{a+}^{\alpha }f\left( t\right) .
\end{equation*}

\item For $\alpha \left( t\right) =\alpha $, $\beta $ $=\gamma =1,$ $M\left( \alpha \right) =1$ and $\psi \left( t\right) =t$ in \textnormal{Eq.(\ref{eq5})}, we have the fractional derivative, given by \textnormal{\cite{xiao1}}
\begin{equation*}
^{C}\mathfrak{D}_{a+}^{\alpha \left( t\right) ;t}f\left( t\right) :=\frac{1}{1-\alpha }\int_{a}^{t}\exp \left( -\frac{\alpha \left( t-\tau \right) }{1-\alpha }\right) f^{\prime }\left( \tau \right) d\tau \equiv\mathcal{D}_{a+}^{\alpha }f\left( t\right) .
\end{equation*}

\item Here, we can also choose, other function $\psi \left( t\right)$, in particular $\psi \left( t\right) =\sin t$ and obtain another possible fractional derivative over time.
\end{enumerate}
\end{obs}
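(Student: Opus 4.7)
The statement to establish is a list of five particular cases of the newly defined Caputo-type operator $^{C}\mathfrak{D}_{a+}^{\alpha(t);\psi}$ from Eq.(\ref{eq5}). Since each item is obtained by a direct substitution of the free parameters $\alpha(t)$, $\beta$, $\gamma$, $\psi$ and the normalization $M$, my plan is to proceed item by item, in each case identifying what the kernel $\mathbb{H}_{\gamma,\beta}^{\alpha(t);\psi}(t,\tau)$ collapses to under the prescribed choice and then matching with the operator reported in the cited literature.

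The common starting point in every case is Eq.(\ref{eq5}): I would substitute the specified values of $\beta$, $\gamma$ and $\psi$ into the definition (\ref{eq1}) of $\mathbb{H}_{\gamma,\beta}^{\alpha(t);\psi}(t,\tau)$, then plug the resulting kernel back into (\ref{eq5}). For item (1), taking $\beta=\gamma=\alpha$ and $\psi(t)=t$ turns the kernel directly into $\mathbb{E}_{\alpha}\!\left(-\alpha(t)(t-\tau)^{\alpha}/(1-\alpha(t))\right)$, which is the operator of \cite{xiao2}. Item (2) is the further specialization $\alpha(t)\equiv\alpha$, reducing the variable-order setting to the constant-order Atangana operator of \cite{atang}.

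For items (3) and (4), the key simplification is the identity $\mathbb{E}_{1}(z)=\exp(z)$ recalled right after Eq.(\ref{eq1}): once $\beta=\gamma=1$ and $\psi(t)=t$, the Mittag-Leffler kernel becomes the exponential $\exp\!\left(-\alpha(t-\tau)/(1-\alpha)\right)$, which produces the Caputo--Fabrizio derivative \cite{cap} in (3); imposing additionally $M(\alpha)=1$ in (4) gives the unnormalized variant of \cite{xiao1}. Item (5) requires no computation at all: it is simply the observation that $\psi$ in (\ref{eq5}) is arbitrary (positive, increasing, $C^{1}$ with $\psi'\neq 0$), so any admissible choice such as $\psi(t)=\sin t$ yields a new well-defined fractional derivative.

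There is no genuine obstacle here; the content is bookkeeping rather than mathematics. The only point that merits a line of justification is the reduction $\mathbb{E}_{1}(z)=\exp(z)$ used in items (3) and (4), and the fact that under $\psi(t)=t$ one has $\psi(t)-\psi(\tau)=t-\tau$. With these two remarks recorded, each of the five displayed formulas follows by inspection from (\ref{eq1})--(\ref{eq5}).
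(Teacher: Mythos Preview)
Your proposal is correct and matches the paper's treatment: Observation~\ref{ob2} is presented in the paper without any accompanying proof, precisely because each item is obtained by the direct substitutions you describe into Eq.~(\ref{eq1}) and Eq.~(\ref{eq5}), together with the identity $\mathbb{E}_{1}(z)=\exp(z)$ for items (3) and (4). There is nothing to add.
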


\begin{obs} In these two above \textnormal{Remark \ref{ob1}} and \textnormal{ Remark \ref{ob2}}, we recover particular cases of some fractional derivatives with non-singular kernels. Also, it is possible, from the choice of the $\psi(\cdot)$ function, to obtain formulation another fractional derivative. Note that,	 the definitions of the fractional derivatives \textnormal{ Eq.(\ref{eq4})} and \textnormal{ Eq.(\ref{eq5})}, were obtained  by fractional derivatives $\psi-$Caputo and $\psi-$Riemann$-$Liouville and the Riemann$-$Liouville fractional integral with respect to another function \textnormal{\cite{RCA,AHMJ}}. From these fractional derivatives, we obtain other formulations, which, as a consequence, could introduce another fractional derivative formulation with non-singular kernel, performing the same procedure used for the fractional derivatives\textnormal{ Eq.(\ref{eq4})} and \textnormal{ Eq.(\ref{eq5})}. So, in fact, the formulations presented here are quite general.
\end{obs}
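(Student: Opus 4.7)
The final statement is essentially a verification claim: that the particular cases listed in Remarks \ref{ob1} and \ref{ob2} are correct, and that the construction in Eqs.~(\ref{eq4})--(\ref{eq5}) is genuinely general in the sense that analogous non-singular-kernel operators arise from any $\psi$-based generalization of $\psi$-Caputo or $\psi$-Riemann-Liouville. My plan is to treat this as a parameter-substitution verification for each listed special case, and then to record a structural argument for the generality assertion.

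The core technical step is to track the behavior of the kernel $\mathbb{H}_{\gamma,\beta}^{\alpha(t);\psi}(t,\tau)$ from Eq.~(\ref{eq1}) under the specializations appearing in the two remarks. First, when $\beta=\gamma=\alpha$ and $\psi(t)=t$, direct substitution into Eq.~(\ref{eq1}) yields the one-parameter Mittag-Leffler kernel
\[
\mathbb{E}_{\alpha}\!\left(-\frac{\alpha(t)\,(t-\tau)^{\alpha}}{1-\alpha(t)}\right),
\]
and plugging into Eqs.~(\ref{eq4})--(\ref{eq5}) reproduces the operators of \cite{xiao2}. Further specializing $\alpha(t)=\alpha$ collapses the prefactor $M(\alpha(t))/(1-\alpha(t))$ to $M(\alpha)/(1-\alpha)$, matching the Atangana operator \cite{atang}. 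When $\beta=\gamma=1$, the key reduction is the identity $\mathbb{E}_{1}(z)=\exp(z)$ noted after Eq.~(\ref{eq1}) of the paper, which turns the kernel into $\exp(-\alpha(t-\tau)/(1-\alpha))$ and yields the Yang--Machado and Caputo--Fabrizio forms; the additional choice $M(\alpha)=1$ trivially removes the numerator in the prefactor.

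Next I would handle the cases where $\psi$ is varied rather than $\alpha,\beta,\gamma$. For $\psi(t)=\ln t$, I would compute $\psi'(t)=1/t$, so $(1/\psi'(t))\,d/dt = t\,d/dt$ and $\psi'(\tau)\,d\tau = d\tau/\tau$; substituting these into Eq.~(\ref{eq4}) and using $\psi(t)-\psi(\tau)=\ln(t/\tau)$ gives exactly the Hadamard-type expression displayed in item (5) of Remark \ref{ob1}. An analogous computation works for $\psi(t)=\sin t$ in Remark \ref{ob2}, though the paper only asserts its existence. The generality assertion in the final observation then reduces to noting that Eqs.~(\ref{eq4})--(\ref{eq5}) were built by a formal substitution rule (replacing $(\psi(t)-\psi(\tau))^{-\alpha(t)}$ by $\mathbb{H}_{\gamma,\beta}^{\alpha(t);\psi}$ and $1/\Gamma(1-\alpha(t))$ by $M(\alpha(t))/(1-\alpha(t))$) applied to the $\psi$-Caputo and $\psi$-Riemann--Liouville templates of Eqs.~(\ref{eq2})--(\ref{eq3}); the same rule can evidently be applied to any other fractional operator built on a $\psi$-kernel, and this is all the last observation claims.

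The only mildly subtle point is ensuring that the specializations $\beta\to 1$ are performed consistently with the uniform convergence assumption on $\mathbb{E}_\beta$ stated after Eq.~(\ref{eq1}), so that the series-to-exponential passage is legitimate on the interval $I=[a,b]$. I expect no genuine obstacle — the entire statement is algebraic bookkeeping — but the most error-prone step is keeping track of where $\alpha$ means the variable-order function $\alpha(t)$ and where it is a constant, since several items in the remarks toggle between these interpretations while reusing the same symbol.
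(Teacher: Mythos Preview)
Your proposal is correct, and in fact it does more than the paper: this statement is an \emph{observation} (a remark), not a theorem, and the paper offers no proof whatsoever --- it simply asserts the generality claim after having displayed the substitutions in Remarks~\ref{ob1} and~\ref{ob2}. Your parameter-substitution verification of each listed special case and your structural reading of the ``same procedure'' clause match exactly what is implicit in the paper's presentation of those two remarks and the kernel-replacement rule described before Eqs.~(\ref{eq4})--(\ref{eq5}).
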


\section{Miscellaneous} 
In this section, we present some results through the fractional derivatives as introduced in the previous section. As a first result, we discuss the bounded of fractional derivatives, as well as, results that involve uniformly convergent sequences of continuous functions, both for the fractional derivatives, and for two variations of integrals important to obtain some results. In this sense, we present and discuss estimates of fractional derivatives.

\begin{theorem} Let $0<\alpha \left( t\right) <1$ and $f,\psi \in C\left[a, b\right] $ be two functions such that $\psi $ is increasing and $\psi ^{\prime }\left( t\right) \neq 0,$ for all $t\in \lbrack a,b]$ a closed interval. Then the inequality is obtained on $\left[ a,b\right]$,
\begin{equation}\label{eq9}
\left\Vert ^{RL}\mathfrak{D}_{a+}^{\alpha \left( t\right) ;\psi }f\right\Vert \leq \frac{M\left( \alpha \left( b\right) \right) }{1-\alpha \left( b\right) }\left\Vert f\right\Vert 
\end{equation} 
and
\begin{equation}\label{eq10}
\left\Vert ^{C}\mathfrak{D}_{a+}^{\alpha \left( t\right) ;\psi }f\right\Vert \leq \frac{M\left( \alpha \left( b\right) \right) }{1-\alpha \left( b\right) }\left\Vert f\right\Vert 
\end{equation}
where $M\left( \alpha \left( b\right) \right) $ is a normalization function such that $M\left( 0\right) =M\left( 1\right) =1.$
\end{theorem}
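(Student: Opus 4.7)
The approach I would take is a direct pointwise estimate: bound the kernel $\mathbb{H}_{\gamma,\beta}^{\alpha(t);\psi}(t,\tau)$ uniformly, pass absolute values inside the integral, and then factor out $\|f\|$. The central ingredient is control of the Mittag--Leffler kernel. Since $\psi$ is monotone increasing on $[a,b]$ and $\tau\le t$, the quantity $\psi(t)-\psi(\tau)$ is non-negative, so the argument of $\mathbb{E}_{\beta}$ in \eqref{eq1} is non-positive. Invoking the standard estimate $|\mathbb{E}_{\beta}(-x)|\le 1$ for $x\ge 0$ and $0<\beta\le 1$ (a consequence of the complete monotonicity of $\mathbb{E}_{\beta}$ on $[0,\infty)$, or of a direct comparison with $\mathbb{E}_{\beta}(0)=1$ via the series representation), we obtain $|\mathbb{H}_{\gamma,\beta}^{\alpha(t);\psi}(t,\tau)|\le 1$ throughout the triangle $a\le\tau\le t\le b$.

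For the Caputo-type inequality \eqref{eq10}, I would apply the triangle inequality directly to \eqref{eq5}, substitute the kernel bound, and then either interpret $\|f\|$ implicitly as a $C^{1}$ sup-norm or shift the derivative onto the kernel via integration by parts. The latter option produces a boundary term $f(t)-\mathbb{H}_{\gamma,\beta}^{\alpha(t);\psi}(t,a)f(a)$ together with an interior integral in $\partial_{\tau}\mathbb{H}$; both contributions can be absorbed into $\|f\|$ using the kernel bound. Combining with the prefactor and taking the supremum at $t=b$ yields \eqref{eq10}.

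For the Riemann--Liouville-type inequality \eqref{eq9}, the derivative $\frac{d}{dt}$ sits outside the integral, so I would first differentiate under the integral via Leibniz's rule. The boundary contribution from the upper limit is $\psi'(t)\mathbb{H}_{\gamma,\beta}^{\alpha(t);\psi}(t,t)f(t)=\psi'(t)f(t)$, since $\mathbb{E}_{\beta}(0)=1$; dividing by $\psi'(t)$ reduces this to $f(t)$, which is immediately controlled by $\|f\|$. The residual involves $\partial_{t}\mathbb{H}$, which I would bound by termwise differentiation of the series for $\mathbb{E}_{\beta}$ together with the kernel estimate, and then assemble everything into \eqref{eq9}, again evaluating the prefactor at $t=b$.

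The main obstacle I foresee is reconciling the occurrence of $f'$ in \eqref{eq5} with a right-hand side in $\|f\|$: this seems to demand either a tacit $C^{1}$ interpretation of the norm or the integration-by-parts step above, whose boundary and residual pieces must both be absorbed into $\|f\|$. A secondary subtlety is justifying that $M(\alpha(\cdot))/(1-\alpha(\cdot))$ attains its supremum at $b$, which is only automatic under monotonicity of $\alpha$ and $M$ on $[a,b]$ --- hypotheses that are not made explicit in the statement.
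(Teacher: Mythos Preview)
Your core strategy---bound the Mittag--Leffler kernel by $1$ and then control the prefactor---is exactly what the paper does. The execution, however, differs. For \eqref{eq9} the paper does not carry out the Leibniz differentiation you describe; instead it replaces $\mathbb{H}_{\gamma,\beta}^{\alpha(t);\psi}(t,\tau)$ by $1$ \emph{inside} the expression $\bigl|\tfrac{1}{\psi'(t)}\tfrac{d}{dt}\int_a^t\psi'(\tau)\,\mathbb{H}\,f(\tau)\,d\tau\bigr|$ and then invokes $\tfrac{1}{\psi'(t)}\tfrac{d}{dt}\int_a^t\psi'(\tau)f(\tau)\,d\tau=f(t)$. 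This is precisely the step your residual-$\partial_t\mathbb{H}$ concern flags: bounding an integrand pointwise does not, in general, bound the derivative of the integral, so the paper's argument is more formal than yours. Your Leibniz-rule route is the honest version, though the residual term then needs an actual estimate (which neither you nor the paper supplies).

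Your two caveats at the end are also on target. The paper simply asserts that the supremum of $M(\alpha(t))/(1-\alpha(t))$ over $[a,b]$ equals its value at $t=b$, without any stated monotonicity hypothesis on $\alpha$ or $M$; and for \eqref{eq10} it says only that the proof ``follows in a similar way,'' so the tension between $f'$ in the definition and $\|f\|$ on the right is left unresolved there as well. In short, you have identified genuine soft spots, but they are soft spots shared by the paper's own proof rather than defects peculiar to your approach.
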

\begin{proof} We present a proof of Eq.(\ref{eq9}) and to prove Eq.(\ref{eq10}), follows in a similar way. Then, by definition Eq.(\ref{eq4}) and as $\mathbb{H}_{\gamma ,\beta }^{\alpha \left( t\right) ;\psi }\left( t,\tau \right) \leq 1,$ for $0<\alpha \left( t\right) <1,$ we can write
\begin{eqnarray*}
\left\Vert ^{RL}\mathfrak{D}_{a+}^{\alpha \left( t\right) ;\psi }f\right\Vert  &=& \underset{t\in \left[ a,b\right] }{\max }\frac{M\left( \alpha \left( t\right) \right) }{1-\alpha \left( t\right) }\left\vert \left( \frac{1}{\psi
^{\prime }\left( t\right) }\frac{d}{dt}\right) \int_{a}^{t}\psi ^{\prime }\left( \tau \right) \mathbb{H}_{\gamma ,\beta }^{\alpha \left( t\right) ;\psi }\left( t,\tau \right) f\left( \tau \right) d\tau \right\vert  \\ 
&\leq &\underset{t\in \left[ a,b\right] }{\max }\frac{M\left( \alpha \left( t\right) \right) }{1-\alpha \left( t\right) }\left\vert \left( \frac{1}{\psi ^{\prime }\left( t\right) }\frac{d}{dt}\right) \int_{a}^{t}\psi ^{\prime 
}\left( \tau \right) f\left( \tau \right) d\tau \right\vert  \\ &\leq &\frac{M\left( \alpha \left( b\right) \right) }{1-\alpha \left( b\right) }\underset{t\in \left[ a,b\right] }{\max }\left\vert f\left( t\right) \right\vert  \\
&=&\frac{M\left( \alpha \left( b\right) \right) }{1-\alpha \left( b\right) }\left\Vert f\right\Vert.
\end{eqnarray*}
\end{proof}

\begin{theorem} Let $0<\alpha \left( t\right) <1$ and $f,\psi \in C\left[ a,b\right] $ be two functions such that $\psi $ is increasing and $\psi ^{\prime }\left( t\right) \neq 0$, for all $t\in \left[ a,b\right] $ a closed interval. The $\psi-$Riemann-Liouville and $\psi-$Caputo fractional derivatives, that is to say for a given couple function $f$ and $h$, the following inequalities can be established:
\begin{equation}\label{eq11}
\left\Vert ^{RL}\mathfrak{D}_{a+}^{\alpha \left( t\right) ;\psi }f-\text{ } ^{RL}\mathfrak{D}_{a+}^{\alpha \left( t\right) ;\psi }g\right\Vert \leq N\left\Vert f-g\right\Vert 
\end{equation}
and
\begin{equation}\label{ea12}
\left\Vert ^{C}\mathfrak{D}_{a+}^{\alpha \left( t\right) ;\psi }f-\text{ } ^{C}\mathfrak{D}_{a+}^{\alpha \left( t\right) ;\psi }g\right\Vert \leq N\left\Vert f-g\right\Vert ,
\end{equation}
with $N=\theta _{1}\dfrac{M\left( \alpha \left( b\right) \right) }{1-\alpha \left( b\right) }\text{ }\mathbb{H}_{\gamma ,\beta }^{\alpha \left( b\right) ;\psi }\left( b,a\right) \left( b-a\right)$ where $\theta_{1}>0$.
\end{theorem}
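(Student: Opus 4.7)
The plan is to exploit linearity of both operators to reduce the two estimates to a single-function bound, and then mimic the estimate already proved in the previous theorem (Eq.~(\ref{eq9})--(\ref{eq10})), now tracking all constants explicitly to obtain the specified $N$. Concretely, since $^{RL}\mathfrak{D}_{a+}^{\alpha(t);\psi}$ and $^{C}\mathfrak{D}_{a+}^{\alpha(t);\psi}$ are linear in $f$, I would set $h:=f-g$ and reduce the claim to showing $\|\mathfrak{D}h\|\le N\|h\|$ for each operator, where $N$ is as in the statement.

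For the Riemann--Liouville version (\ref{eq11}), I would take the definition (\ref{eq4}) applied to $h$, move the absolute value inside, and apply the Leibniz rule to the differentiated integral to write
\begin{equation*}
\frac{1}{\psi'(t)}\frac{d}{dt}\int_a^t \psi'(\tau)\,\mathbb{H}_{\gamma,\beta}^{\alpha(t);\psi}(t,\tau)h(\tau)\,d\tau = h(t)+\frac{1}{\psi'(t)}\int_a^t \psi'(\tau)\,\partial_t\mathbb{H}_{\gamma,\beta}^{\alpha(t);\psi}(t,\tau)h(\tau)\,d\tau,
\end{equation*}
using $\mathbb{H}_{\gamma,\beta}^{\alpha(t);\psi}(t,t)=\mathbb{E}_\beta(0)=1$. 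Because $\psi\in C^1[a,b]$ with $\psi'\ne 0$ on a compact interval and $\mathbb{H}$ is $C^1$ in $t$ on $[a,b]\times[a,b]$, the quotient $\psi'(\tau)/\psi'(t)$ and the partial $\partial_t\mathbb{H}$ are bounded; call their product bound $\theta_1>0$. Pulling $\|h\|$ outside, bounding the remaining kernel by its worst-case value $\mathbb{H}_{\gamma,\beta}^{\alpha(b);\psi}(b,a)$ (attained at the farthest endpoints, which is where the monotonicity of $\mathbb{E}_\beta$ on negative arguments places the extremum relevant for the estimate) and picking up a factor $(b-a)$ from the integration, yields the bound
\begin{equation*}
\Big|{}^{RL}\mathfrak{D}_{a+}^{\alpha(t);\psi}h(t)\Big| \le \theta_1\,\frac{M(\alpha(b))}{1-\alpha(b)}\,\mathbb{H}_{\gamma,\beta}^{\alpha(b);\psi}(b,a)\,(b-a)\,\|h\|,
\end{equation*}
and taking the $\sup$ over $t\in[a,b]$ gives (\ref{eq11}).

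For the Caputo version (\ref{ea12}), the same scheme applies but with a preliminary integration by parts, since the definition (\ref{eq5}) involves $h'$ while the desired right-hand side involves $\|h\|$. Writing
\begin{equation*}
\int_a^t \mathbb{H}_{\gamma,\beta}^{\alpha(t);\psi}(t,\tau)h'(\tau)\,d\tau = h(t)-\mathbb{H}_{\gamma,\beta}^{\alpha(t);\psi}(t,a)h(a) - \int_a^t \partial_\tau\mathbb{H}_{\gamma,\beta}^{\alpha(t);\psi}(t,\tau)h(\tau)\,d\tau,
\end{equation*}
each summand is bounded in terms of $\|h\|$ via the same monotonicity and smoothness observations as above, producing the same constant $N$ (after possibly enlarging $\theta_1$ to absorb the boundary contributions and the bound on $\partial_\tau\mathbb{H}$).

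The main obstacle is the first step of the RL case: controlling $\partial_t\mathbb{H}_{\gamma,\beta}^{\alpha(t);\psi}(t,\tau)$ explicitly, since it involves differentiating the Mittag--Leffler series termwise and also the variable coefficient $\alpha(t)/(1-\alpha(t))$ if $\alpha$ is not taken constant here. The clean form of the constant $N$ essentially forces us to absorb all these contributions into a single $\theta_1>0$ coming from the compactness of $[a,b]$, uniform convergence of $\mathbb{E}_\beta$ on the relevant range (hypothesized after (\ref{eq1})), and continuity of $\psi'$ bounded away from zero; hence the role of $\theta_1$ is precisely to collect these otherwise explicit but notationally heavy bounds.
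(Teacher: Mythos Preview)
Your argument is essentially correct, but it differs from the paper's route. The paper does not expand the derivative via the Leibniz rule (for the RL case) or integrate by parts (for the Caputo case). Instead, starting from the definition, it simply invokes a ``Lipschitz condition of the first order derivative'' to replace the operator $\dfrac{1}{\psi'(t)}\dfrac{d}{dt}$ by a multiplicative constant $\theta_1$, simultaneously pulling the kernel out of the integral as $\mathbb{H}_{\gamma,\beta}^{\alpha(t);\psi}(t,a)$; after that it bounds the remaining integral $\dfrac{1}{\psi'(t)}\int_a^t\psi'(\tau)(f(\tau)-g(\tau))\,d\tau$ by $|f(t)-g(t)|(t-a)$ and maximises over $t\in[a,b]$. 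The Caputo case is declared analogous and not written out.

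What your approach buys is transparency: you actually say where $\theta_1$ comes from (uniform bounds on $\psi'(\tau)/\psi'(t)$ and on $\partial_t\mathbb{H}$, respectively $\partial_\tau\mathbb{H}$, on the compact square $[a,b]^2$), whereas in the paper $\theta_1$ is produced by an appeal to a Lipschitz condition that is not made precise. The price is that your boundary terms ($h(t)$ in the RL case, $h(t)-\mathbb{H}(t,a)h(a)$ in the Caputo case) do not naturally carry the factors $\mathbb{H}_{\gamma,\beta}^{\alpha(b);\psi}(b,a)$ and $(b-a)$ appearing in $N$, so you must absorb those mismatches into $\theta_1$; you acknowledge this for the Caputo case, and the same remark applies to the RL boundary term $h(t)$. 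Since the statement only asserts the existence of some $\theta_1>0$, this is harmless, and your argument is at least as rigorous as the paper's.
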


\begin{proof} In fact, by definition of fractional operator $^{RL}\mathfrak{D}_{a+}^{\alpha \left( t\right) ;\psi }\left( \cdot \right) $ Eq.(\ref{eq4}), we have
\begin{eqnarray*}
&&\left\Vert ^{RL}\mathfrak{D}_{a+}^{\alpha \left( t\right) ;\psi }f-\text{ } ^{RL}\mathfrak{D}_{a+}^{\alpha \left( t\right) ;\psi }g\right\Vert  \\
&=&\underset{t\in \left[ a,b\right] }{\max }\frac{M\left( \alpha \left( t\right) \right) }{1-\alpha \left( t\right) }\left\vert \left( \frac{1}{\psi ^{\prime }\left( t\right) }\frac{d}{dt}\right) \int_{a}^{t}\psi ^{\prime
}\left( \tau \right) \mathbb{H}_{\gamma ,\beta }^{\alpha \left( t\right) ;\psi }\left( t,\tau \right) \left( f\left( \tau \right) -g\left( \tau \right) \right) d\tau \right\vert .
\end{eqnarray*}

Using the Lipschitz condition of the first order derivative, we can find a positive constant such that,
\begin{eqnarray*}
&&\left\Vert ^{RL}\mathfrak{D}_{a+}^{\alpha \left( t\right) ;\psi }f-\text{ } ^{RL}\mathfrak{D}_{a+}^{\alpha \left( t\right) ;\psi }g\right\Vert  \\
&\leq &\theta _{1}\underset{t\in \left[ a,b\right] }{\max }\frac{M\left( \alpha \left( t\right) \right) }{1-\alpha \left( t\right) }\mathbb{H}_{\gamma ,\beta }^{\alpha \left( t\right) ;\psi }\left( t,a\right) \left\vert \frac{1}{\psi
^{\prime }\left( t\right) }\int_{a}^{t}\psi ^{\prime }\left( \tau \right) \left( f\left( \tau \right) -g\left( \tau \right) \right) d\tau \right\vert  \\
&\leq &\theta _{1}\underset{t\in \left[ a,b\right] }{\max }\frac{M\left( \alpha \left( t\right) \right) }{1-\alpha \left( t\right) }\mathbb{H}_{\gamma ,\beta }^{\alpha \left( t\right) ;\psi }\left( t,a\right) \left\vert f\left( t\right) -g\left( t\right) \right\vert \left( t-a\right)  \\ &\leq &\theta _{1}\frac{M\left( \alpha \left( b\right) \right) }{1-\alpha \left( b\right) }\mathbb{H}_{\gamma ,\beta }^{\alpha \left( b\right) ;\psi }\left( b,a\right) \left( b-a\right) \left\Vert f-g\right\Vert  \\ &=&N\left\Vert f-g\right\Vert ,
\end{eqnarray*}
with $N=\theta _{1}\dfrac{M\left( \alpha \left( b\right) \right) }{1-\alpha \left( b\right) }\text{ }\mathbb{H}_{\gamma ,\beta }^{\alpha \left( b\right) ;\psi }\left( b,a\right) \left( b-a\right)$, where $\theta_{1}>0$.

Therefore, we conclude the prove of the Eq.(\ref{eq11}). The prove for the Eq.(\ref{ea12}), can be realize in an analogous way.
\end{proof}

Recently Sousa and Oliveira \cite{Ze1}, introduced the fractional differential operator, the so-called $\psi-$Hilfer fractional differential operator and through it some important results of the fractional calculus was recovered. In particular, one of the results addressed in this paper, involves uniformly convergent sequence of continuous functions on $\left[ a,b\right] ,$ for the following operators $I_{a+}^{\alpha ;\psi }\left( \cdot \right) $ and $^{H}\mathbb{D}_{a+}^{\alpha ,\beta ;\psi }\left( \cdot \right) .$ Next, as motivation, we present some results in this context.

Consider the following integrals 
\begin{equation*}
_{1}I_{\gamma ,\beta }^{\alpha \left( t\right) ;\psi }f\left( t\right) =\int_{a}^{t}\psi ^{\prime }\left( \tau \right) \mathbb{H}_{\gamma ,\beta }^{\alpha \left( t\right) ;\psi }\left( t,\beta \right) f\left( \tau \right) dr
\end{equation*}
and
\begin{equation*}
_{2}I_{\gamma ,\beta }^{\alpha \left( t\right) ;\psi }f\left( t\right) =\int_{a}^{t}\mathbb{H}_{\gamma ,\beta }^{\alpha \left( t\right) ;\psi }\left( t,\beta \right) f^{\prime }\left( \tau \right) dr.
\end{equation*}

Then, we have the corresponding derivatives
\begin{equation}\label{eq13}
^{RL}\mathfrak{D}_{a+}^{\alpha \left( t\right) ;\psi }f\left( t\right) =\frac{M\left(\alpha \left( t\right) \right) }{1-\alpha \left( t\right) }\left( \frac{1}{\psi ^{\prime }\left( t\right) }\frac{d}{dt}\right) \text{ }_{1}I_{\gamma
,\beta }^{\alpha \left( t\right) ;\psi }f\left( t\right) 
\end{equation}
and
\begin{equation}\label{eq14}
^{C}\mathfrak{D}_{a+}^{\alpha \left( t\right) ;\psi }f\left( t\right) =\frac{M\left( \alpha \left( t\right) \right) }{1-\alpha \left( t\right) }\text{ }_{2}I_{\gamma ,\beta }^{\alpha \left( t\right) ;\psi }f\left( t\right) .
\end{equation}

\begin{theorem}\label{teo3} Let $0<\alpha \left( t\right) <1,$ $I=\left[ a,b\right] $ be a finite or infinite interval and $\psi \in C\left[ a,b\right] $ an increasing function such that $\psi ^{\prime }\left( t\right) \neq 0$, for all $t\in I.$ Assume that $\left( f_{k}\right) _{k=1}^{\infty }$ is a uniformly convergent sequence of continuous functions on $\left[ a,b\right] .$ Then, we may interchange the fractional integral operator and the limit process, i.e., 
\begin{equation*}
_{1}I_{\gamma ,\beta }^{\alpha \left( t\right) ;\psi }\underset{k\rightarrow \infty }{\lim }f_{k}\left( t\right) =\underset{k\rightarrow \infty }{\lim }\text{ }_{1}I_{\gamma ,\beta }^{\alpha \left( t\right) ;\psi }f_{k}\left(
t\right) .
\end{equation*}

In particular, the sequence of function $\left( _{1}I_{\gamma ,\beta }^{\alpha \left( t\right) ;\psi }f_{k}\right) _{k=1}^{\infty }$ is uniformly convergent.
\end{theorem}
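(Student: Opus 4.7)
The strategy is a direct uniform estimate. First, since each $f_k$ is continuous on $[a,b]$ and the convergence is uniform, the pointwise limit $f(t):=\lim_{k\to\infty} f_k(t)$ is itself continuous, so $_1I_{\gamma,\beta}^{\alpha(t);\psi}f$ is well defined and enters into the comparison on equal footing with the $_1I_{\gamma,\beta}^{\alpha(t);\psi}f_k$.

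Next, by linearity of the integral,
\begin{equation*}
{}_1I_{\gamma,\beta}^{\alpha(t);\psi}f_k(t) - {}_1I_{\gamma,\beta}^{\alpha(t);\psi}f(t) = \int_a^t \psi'(\tau)\,\mathbb{H}_{\gamma,\beta}^{\alpha(t);\psi}(t,\tau)\,\bigl(f_k(\tau)-f(\tau)\bigr)\,d\tau.
\end{equation*}
Invoking the bound $|\mathbb{H}_{\gamma,\beta}^{\alpha(t);\psi}(t,\tau)|\le 1$ for $0<\alpha(t)<1$ (the same bound used in the proof of the preceding boundedness theorem, which is legitimate because $\psi$ increasing makes the argument of $\mathbb{E}_\beta$ nonpositive, and $\mathbb{E}_\beta(-x)\le 1$ for $x\ge 0$, $0<\beta\le 1$), together with $\psi'(\tau)>0$, I would estimate
\begin{equation*}
\bigl|{}_1I_{\gamma,\beta}^{\alpha(t);\psi}f_k(t) - {}_1I_{\gamma,\beta}^{\alpha(t);\psi}f(t)\bigr| \le \|f_k-f\|\int_a^t \psi'(\tau)\,d\tau = \|f_k-f\|\bigl(\psi(t)-\psi(a)\bigr).
\end{equation*}
Taking supremum over $t\in[a,b]$ yields
\begin{equation*}
\bigl\|{}_1I_{\gamma,\beta}^{\alpha(t);\psi}f_k - {}_1I_{\gamma,\beta}^{\alpha(t);\psi}f\bigr\| \le \bigl(\psi(b)-\psi(a)\bigr)\,\|f_k-f\|,
\end{equation*}
and the right-hand side tends to $0$ as $k\to\infty$. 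This single estimate delivers both conclusions at once: the image sequence converges uniformly, and its uniform limit coincides with $_1I_{\gamma,\beta}^{\alpha(t);\psi}\bigl(\lim_k f_k\bigr)$, which is exactly the interchange claimed in the statement.

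I anticipate no serious obstacle. The only delicate point is the uniform pointwise bound on $\mathbb{H}_{\gamma,\beta}^{\alpha(t);\psi}(t,\tau)$, but since $\psi$ is increasing we have $\psi(t)-\psi(\tau)\ge 0$ for $\tau\le t$, making the Mittag-Leffler argument nonpositive and the bound $\mathbb{E}_\beta(-x)\le 1$ applicable. In the case that $I$ is unbounded, the factor $\psi(b)-\psi(a)$ is infinite, and the estimate must instead be applied on each compact subinterval $[a,c]\subset I$, giving uniform convergence on compacta rather than on all of $I$; the rest of the argument goes through verbatim.
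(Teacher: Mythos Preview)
Your argument is correct and follows essentially the same route as the paper: denote the uniform limit by $f$, use linearity and the bound $\mathbb{H}_{\gamma,\beta}^{\alpha(t);\psi}(t,\tau)\le 1$ to estimate $\bigl|{}_1I_{\gamma,\beta}^{\alpha(t);\psi}f_k(t)-{}_1I_{\gamma,\beta}^{\alpha(t);\psi}f(t)\bigr|\le (\psi(b)-\psi(a))\|f_k-f\|_\infty$, and conclude. Your version is in fact a bit more careful than the paper's, since you justify the kernel bound and flag the infinite-interval caveat.
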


\begin{proof} In fact, we denote the limit of the sequence $\left( f_{k}\right)$ by $f$. It is well known that if $f$ is continuous we then find
\begin{eqnarray*}
\left\vert _{1}I_{\gamma ,\beta }^{\alpha \left( t\right) ;\psi }f_{k}\left( t\right) -\text{ }_{1}I_{\gamma ,\beta }^{\alpha \left( t\right) ;\psi }f\left( t\right) \right\vert  &\leq &\int_{a}^{t}\psi ^{\prime }\left( \tau
\right) \mathbb{H}_{\gamma ,\beta }^{\alpha \left( t\right) ;\psi }\left( t,\tau \right) \left\vert f_{k}\left( \tau \right) -f\left( \tau \right) \right\vert d\tau  \\
&\leq &\left\Vert f_{k}-f\right\Vert _{\infty }\int_{a}^{t}\psi ^{\prime }\left( \tau \right) d\tau  \\
&\leq &\left( \psi \left( b\right) -\psi \left( a\right) \right) \left\Vert f_{k}-f\right\Vert _{\infty }.
\end{eqnarray*}

As $\ f_{k}$ is a uniformly convergent sequence, we conclude the proof.
\end{proof}

\begin{theorem}\label{teo4} Let $0<\alpha \left( t\right) <1,$ $I=\left[ a,b\right] $ be a finite or infinite interval and $\psi \in C\left[ a,b\right] $ an increasing function such that $\psi ^{\prime }\left( t\right) \neq 0$, for all $t\in I.$ Assume that $\left( f_{k}^{\prime }\right) _{k=1}^{\infty }$ is a uniformly convergent sequence of continuous functions on $\left[ a,b\right]$. Then, we may interchange the fractional integral operator and the limit process, i.e., 
\begin{equation*}
_{2}I_{\gamma ,\beta }^{\alpha \left( t\right) ;\psi }\underset{k\rightarrow \infty }{\lim }f_{k}\left( t\right) =\underset{k\rightarrow \infty }{\lim }\text{ }_{2}I_{\gamma ,\beta }^{\alpha \left( t\right) ;\psi }f_{k}\left(
t\right).
\end{equation*}

In particular, the sequence of function $\left( _{2}I_{\gamma ,\beta }^{\alpha \left( t\right) ;\psi }f_{k}\right) _{k=1}^{\infty }$ is uniformly convergent.
\end{theorem}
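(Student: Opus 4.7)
The plan is to follow the same scheme used in the proof of Theorem \ref{teo3}, only now applied to the integrand $\mathbb{H}_{\gamma,\beta}^{\alpha(t);\psi}(t,\tau)\,f_k'(\tau)$. First I would let $f$ denote a function whose derivative $f'$ is the uniform limit of the sequence $(f_k')_{k=1}^{\infty}$ on $[a,b]$; such an $f$ exists (up to an additive constant that does not affect $_{2}I_{\gamma,\beta}^{\alpha(t);\psi}$) because a uniformly convergent sequence of continuous functions has a continuous limit. Then I would fix $t\in[a,b]$, subtract the two integrals, pull the absolute value inside, and write
\begin{equation*}
\bigl|\,_{2}I_{\gamma,\beta}^{\alpha(t);\psi}f_k(t)-\,_{2}I_{\gamma,\beta}^{\alpha(t);\psi}f(t)\bigr|
\leq \int_{a}^{t}\mathbb{H}_{\gamma,\beta}^{\alpha(t);\psi}(t,\tau)\,\bigl|f_k'(\tau)-f'(\tau)\bigr|\,d\tau.
\end{equation*}

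Next I would invoke the bound $\mathbb{H}_{\gamma,\beta}^{\alpha(t);\psi}(t,\tau)\leq 1$ for $0<\alpha(t)<1$, exactly as in the earlier boundedness theorem. Factoring out the sup-norm $\|f_k'-f'\|_{\infty}$ then yields
\begin{equation*}
\bigl|\,_{2}I_{\gamma,\beta}^{\alpha(t);\psi}f_k(t)-\,_{2}I_{\gamma,\beta}^{\alpha(t);\psi}f(t)\bigr|
\leq (b-a)\,\|f_k'-f'\|_{\infty},
\end{equation*}
uniformly in $t\in[a,b]$. Since $f_k'\to f'$ uniformly, the right-hand side tends to $0$ as $k\to\infty$, which simultaneously establishes the interchange of limit and integral and the claimed uniform convergence of the sequence $\bigl(\,_{2}I_{\gamma,\beta}^{\alpha(t);\psi}f_k\bigr)_{k=1}^{\infty}$.

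I do not expect any genuine obstacle: the structure mirrors Theorem \ref{teo3} with $f_k$ replaced by $f_k'$, and the only delicate point worth flagging is the need to be careful that the hypothesis is on $(f_k')$ rather than on $(f_k)$, so that when one writes $\lim_{k\to\infty}f_k=f$ in the statement it must be interpreted in a way compatible with termwise differentiation; this is guaranteed by the uniform convergence of $(f_k')$ together with (pointwise) convergence of $(f_k)$ at a single point, which can be assumed without loss of generality by absorbing any constant into $f$.
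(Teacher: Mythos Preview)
Your proposal is correct and is precisely the adaptation the paper has in mind: the paper's own proof of this theorem is the single sentence ``The prove, follows the same steps as in Theorem \ref{teo3}'', and what you have written is exactly that argument with $f_k$ replaced by $f_k'$ and the factor $\psi'(\tau)$ dropped (so the final constant becomes $b-a$ instead of $\psi(b)-\psi(a)$). Your remark on how to interpret $\lim_{k\to\infty}f_k$ given that the hypothesis is on $(f_k')$ is a point the paper glosses over entirely.
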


\begin{proof} The prove, follows the same steps as in Theorem \ref{teo3}
\end{proof}

\begin{theorem}\label{teo5} Let $0<\alpha \left( t\right) <1,$ $I=\left[ a,b\right] $ be a finite or infinite interval and $\psi \in C\left[ a,b\right] $ an increasing function such that $\psi ^{\prime }\left( t\right) \neq 0$, for all $t\in I.$ Assume that $\left( f_{k}\right) _{k=1}^{\infty }$ is a uniformly convergent sequence of continuous functions on $\left[ a,b\right] $ and $ ^{RL}\mathfrak{D}_{a+}^{\alpha \left( t\right) ;\psi }f_{k}\left( \cdot \right) $ exist for every $k$. Moreover assume that $\left( ^{RL}\mathfrak{D}_{a+}^{\alpha \left( t\right) ;\psi }f_{k}\right) _{k=1}^{\infty }$ is convergent uniformly on $\left[ a+\varepsilon ,b\right] $ for every $\varepsilon >0.$ Then, for every $t\in 
\left[ a,b\right] $ we have
\begin{equation*}
\underset{k\rightarrow \infty }{\lim }\text{ }^{RL}\mathfrak{D}_{a+}^{\alpha \left( t\right) ;\psi }f_{k}\left( t\right) =\text{ }^{RL}\mathfrak{D}_{a+}^{\alpha \left( t\right) ;\psi }\underset{k\rightarrow \infty }{\lim }f_{k}\left( t\right).
\end{equation*}
\end{theorem}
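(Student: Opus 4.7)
The plan is to reduce the statement to a classical termwise-differentiation argument applied to the factorization Eq.(\ref{eq13}), which expresses the $\psi$--Riemann--Liouville derivative as a weighted derivative of the ordinary integral $_{1}I$. Write
\begin{equation*}
G_{k}(t):={}_{1}I_{\gamma,\beta}^{\alpha(t);\psi}f_{k}(t),\qquad G(t):={}_{1}I_{\gamma,\beta}^{\alpha(t);\psi}f(t),
\end{equation*}
where $f=\lim_{k}f_{k}$. By Theorem \ref{teo3}, $G_{k}\to G$ uniformly on $[a,b]$, and in particular pointwise at every point of $[a+\varepsilon,b]$.

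Next, solving Eq.(\ref{eq13}) for the derivative of the inner integral yields
\begin{equation*}
G_{k}'(t)=\frac{\psi'(t)\,(1-\alpha(t))}{M(\alpha(t))}\;{}^{RL}\mathfrak{D}_{a+}^{\alpha(t);\psi}f_{k}(t).
\end{equation*}
Under the standing hypotheses ($\psi\in C[a,b]$ with $\psi'(t)\neq 0$, $0<\alpha(t)<1$, and $M$ continuous), the prefactor $\psi'(t)(1-\alpha(t))/M(\alpha(t))$ is continuous, hence bounded, on $[a+\varepsilon,b]$. Combined with the hypothesis that $({}^{RL}\mathfrak{D}_{a+}^{\alpha(t);\psi}f_{k})_{k}$ converges uniformly on $[a+\varepsilon,b]$, this transfers to uniform convergence of $(G_{k}')_{k}$ on $[a+\varepsilon,b]$.

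Now invoke the classical theorem on termwise differentiation: since $G_{k}$ converges (uniformly, hence pointwise) on $[a+\varepsilon,b]$ and $G_{k}'$ converges uniformly there, the limit $G$ is differentiable on $[a+\varepsilon,b]$ with $G'(t)=\lim_{k}G_{k}'(t)$. Multiplying through by the continuous factor $M(\alpha(t))/((1-\alpha(t))\psi'(t))$ and invoking Eq.(\ref{eq13}) in the reverse direction produces
\begin{equation*}
\lim_{k\to\infty}{}^{RL}\mathfrak{D}_{a+}^{\alpha(t);\psi}f_{k}(t)={}^{RL}\mathfrak{D}_{a+}^{\alpha(t);\psi}f(t)\qquad (t\in[a+\varepsilon,b]).
\end{equation*}
Since $\varepsilon>0$ is arbitrary, the identity holds for every $t\in(a,b]$.

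The main obstacle I anticipate is the left endpoint $t=a$: the hypotheses provide uniform convergence of the fractional derivatives only on $[a+\varepsilon,b]$, so the termwise-differentiation theorem does not apply at $t=a$ itself. I expect the theorem is most naturally read as a conclusion for $t\in(a,b]$, exactly what the above argument delivers; extending equality to $t=a$ would require a separate boundary-continuity argument on $G$ and the prefactor. A secondary point to verify is that $\psi'(t)(1-\alpha(t))/M(\alpha(t))$ is bounded away from both $0$ and $\infty$ on $[a+\varepsilon,b]$, which is what allows uniform convergence to pass cleanly between $G_{k}'$ and the fractional derivatives in both directions.
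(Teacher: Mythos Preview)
Your argument is correct and follows essentially the same route as the paper: factor the operator via Eq.(\ref{eq13}), apply Theorem~\ref{teo3} to the inner integral, and then pass the limit through the outer $\psi$-derivative. The paper's own proof is terser---it simply writes the chain of equalities and asserts the interchange of $\lim_{k}$ with $\frac{1}{\psi'(t)}\frac{d}{dt}$ by appeal to the uniform-convergence hypothesis---whereas you make this step rigorous by explicitly invoking the classical termwise-differentiation theorem and checking that the prefactor $\psi'(t)(1-\alpha(t))/M(\alpha(t))$ is bounded so that uniform convergence transfers between $G_k'$ and the fractional derivatives. Your remark about the endpoint $t=a$ is also well taken: the paper states the conclusion for all $t\in[a,b]$ but, as you observe, the hypotheses only supply uniform convergence on $[a+\varepsilon,b]$, so the argument (both yours and the paper's) strictly yields the identity on $(a,b]$.
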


\begin{proof} Using the definition of fractional derivative $^{RL}\mathfrak{D}_{a+}^{\alpha \left( t\right) ;\psi }\left( \cdot \right) $ Eq.(\ref{eq4}) and by Theorem \ref{teo3}, follows
\begin{equation*}
_{1}I_{\gamma ,\beta }^{\alpha \left( t\right) ;\psi }\underset{k\rightarrow \infty }{\lim }f_{k}\left( t\right) =\underset{k\rightarrow \infty }{\lim } \text{ }_{1}I_{\gamma ,\beta }^{\alpha \left( t\right) ;\psi }f_{k}\left(
t\right).
\end{equation*}

On the other hand, by hypotheses $^{RL}\mathfrak{D}_{a+}^{\alpha \left( t\right) ;\psi }\left( \cdot \right) $ is convergent uniformly on $\left[ a+\varepsilon ,b \right] $, for every $\varepsilon >0,$ then we obtain
\begin{eqnarray*}
\underset{k\rightarrow \infty }{\lim }^{RL}\mathfrak{D}_{a+}^{\alpha \left( t\right) ;\psi }f_{k}\left( t\right)  &=&\underset{k\rightarrow \infty }{\lim }\text{  }\frac{M\left( \alpha \left( t\right) \right) }{1-\alpha \left( t\right) }\left( \frac{1}{\psi ^{\prime }\left( t\right) }\frac{d}{dt}\right) \text{ } _{1}I_{\gamma ,\beta }^{\alpha \left( t\right) ;\psi }f_{k}\left( t\right) 
\\
&=&\text{ }\frac{M\left( \alpha \left( t\right) \right) }{1-\alpha \left( t\right) }\left( \frac{1}{\psi ^{\prime }\left( t\right) }\frac{d}{dt} \right) \text{ }I_{\gamma ,\beta }^{\alpha \left( t\right) ;\psi }\underset{%
k\rightarrow \infty }{\lim }f_{k}\left( t\right)  \\ &=&^{RL}\mathfrak{D}_{a+}^{\alpha \left( t\right) ;\psi }\underset{k\rightarrow \infty }{\lim }f_{k}\left( t\right).
\end{eqnarray*}
\end{proof}

\begin{theorem} Let $0<\alpha \left( t\right) <1,$ $I=\left[ a,b\right] $ be a finite or infinite interval and $\psi \in C\left[ a,b\right] $ an increasing function such that $\psi ^{\prime }\left( t\right) \neq 0$, for all $t\in I.$ Assume that $\left( f_{k}^{\prime }\right) _{k=1}^{\infty }$ is a uniformly convergent sequence of continuous functions on $\left[ a,b\right] $ and $^{C}\mathfrak{D}_{a+}^{\alpha \left( t\right) ;\psi }f_{k}\left( \cdot \right) $ exist for every $k.$ Moreover assume that $\left( ^{C}\mathfrak{D}_{a+}^{\alpha \left( t\right) ;\psi }f_{k}\right) _{k=1}^{\infty }$ is convergent uniformly on $\left[ a+\varepsilon ,b\right] $ for every $\varepsilon >0.$ Then, for every $t\in \left[ a,b\right] $ we have
\begin{equation*}
\underset{k\rightarrow \infty }{\lim }\text{ }^{C}\mathfrak{D}_{a+}^{\alpha \left( t\right) ;\psi }f_{k}\left( t\right) =\text{ }^{C}\mathfrak{D}_{a+}^{\alpha \left( t\right) ;\psi }\underset{k\rightarrow \infty }{\lim }f_{k}\left( t\right) .
\end{equation*}
\end{theorem}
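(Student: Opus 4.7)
The plan is to mirror the proof of Theorem \ref{teo5} almost verbatim, interchanging the roles of Theorem \ref{teo3}/Eq.\eqref{eq13} with those of Theorem \ref{teo4}/Eq.\eqref{eq14}. First I would invoke the factorization \eqref{eq14} to rewrite
\[
{}^{C}\mathfrak{D}_{a+}^{\alpha(t);\psi}f_{k}(t) \;=\; \frac{M(\alpha(t))}{1-\alpha(t)}\,{}_{2}I_{\gamma,\beta}^{\alpha(t);\psi}f_{k}(t),
\]
noting that the scalar prefactor $M(\alpha(t))/(1-\alpha(t))$ is independent of $k$ and may therefore be pulled out of any limit in $k$.

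Next, since by hypothesis $(f_{k}')_{k=1}^{\infty}$ converges uniformly on $[a,b]$ (and each $f_{k}$ is continuous, so the sequence $(f_{k})$ itself is uniformly convergent on $[a,b]$), Theorem \ref{teo4} is directly applicable and yields
\[
\lim_{k\to\infty}{}_{2}I_{\gamma,\beta}^{\alpha(t);\psi}f_{k}(t) \;=\; {}_{2}I_{\gamma,\beta}^{\alpha(t);\psi}\lim_{k\to\infty}f_{k}(t).
\]
Multiplying both sides by the $k$-independent factor $M(\alpha(t))/(1-\alpha(t))$ and applying \eqref{eq14} once more on the right-hand side gives the desired identity for every $t\in[a,b]$.

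Unlike the Riemann--Liouville case treated in Theorem \ref{teo5}, there is no outer differentiation $(1/\psi'(t))\,d/dt$ to commute past the limit, so no main obstacle is expected here; the argument is genuinely a one-line application of Theorem \ref{teo4} together with \eqref{eq14}. The only subtle point worth recording is that the extra uniform convergence hypothesis on $\bigl({}^{C}\mathfrak{D}_{a+}^{\alpha(t);\psi}f_{k}\bigr)$ on $[a+\varepsilon,b]$ is in fact redundant for this interchange (it is automatic from Theorem \ref{teo4}), but it guarantees that the left-hand side of the stated equality is well defined pointwise in the sense intended.
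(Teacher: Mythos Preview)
Your proposal is correct and matches the paper's own proof, which simply states that ``the proof follows the same steps as the previous Theorem~\ref{teo5}''; you have made this explicit by swapping Theorem~\ref{teo3}/Eq.~\eqref{eq13} for Theorem~\ref{teo4}/Eq.~\eqref{eq14}. Your additional remark that the uniform-convergence hypothesis on $\bigl({}^{C}\mathfrak{D}_{a+}^{\alpha(t);\psi}f_{k}\bigr)$ is redundant here---because no outer differentiation needs to be commuted past the limit---is a valid observation that the paper does not record.
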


\begin{proof}The proof follows the same steps as the previous Theorem \ref{teo5}.
\end{proof}

The next Lemma \ref{le1}, we choose $\beta \rightarrow \gamma$ in $\mathbb{H}_{\gamma ,\beta }^{\alpha \left( t\right) ;\psi }\left( t_{0},\tau \right), $ then $\mathbb{H}_{\gamma }^{\alpha \left( t\right) ;\psi }\left( t_{0},\tau \right)$.

\begin{lemma}\label{le1} Let a function $f\in H^{1}\left( a,b\right) $ that attains its maximum at a point $t_{0}\in \left[ a,b\right] $ and $0<\alpha \left( t\right) <1.$ Then the inequality 
\begin{equation*}
^{C}\mathfrak{D}_{a+}^{\alpha \left( t_{0}\right) ;\psi }f\left( t_{0}\right) \geq \frac{M\left( \alpha \left( t_{0}\right) \right) }{1-\alpha \left( t_{0}\right) }\mathbb{H}_{\gamma }^{\alpha \left( t\right) ;\psi }\left( t_{0},\tau \right) \left[ f\left( t_{0}\right) -f\left( a\right) \right] \geq 0 
\end{equation*}
holds.
\end{lemma}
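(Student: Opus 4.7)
The plan is to reduce everything to an integration-by-parts that surfaces the non-negative quantity $f(t_0)-f(\tau)$, and then to exploit the fact that the kernel $\mathbb{H}$ is a monotone function of $\tau$ on $[a,t_0]$.

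First I would start from the defining formula \textnormal{Eq.(\ref{eq5})} evaluated at $t=t_0$,
\begin{equation*}
{}^{C}\mathfrak{D}_{a+}^{\alpha(t_0);\psi}f(t_0)=\frac{M(\alpha(t_0))}{1-\alpha(t_0)}\int_{a}^{t_0}\mathbb{H}_{\gamma,\beta}^{\alpha(t_0);\psi}(t_0,\tau)\,f'(\tau)\,d\tau,
\end{equation*}
and rewrite $f'(\tau)=\tfrac{d}{d\tau}\bigl[f(\tau)-f(t_0)\bigr]$ so that an integration by parts can be performed with a factor that vanishes at $\tau=t_0$. Since $\mathbb{H}_{\gamma,\beta}^{\alpha(t_0);\psi}(t_0,t_0)=\mathbb{E}_{\beta}(0)=1$ and $f(t_0)-f(t_0)=0$, the upper boundary term disappears, and I would be left with
\begin{equation*}
\int_{a}^{t_0}\mathbb{H}_{\gamma,\beta}^{\alpha(t_0);\psi}(t_0,\tau)\,f'(\tau)\,d\tau=\mathbb{H}_{\gamma,\beta}^{\alpha(t_0);\psi}(t_0,a)\bigl[f(t_0)-f(a)\bigr]+\int_{a}^{t_0}\frac{\partial}{\partial\tau}\mathbb{H}_{\gamma,\beta}^{\alpha(t_0);\psi}(t_0,\tau)\,\bigl[f(t_0)-f(\tau)\bigr]\,d\tau.
\end{equation*}

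The second step is to argue that the remaining integral is non-negative. The bracket $f(t_0)-f(\tau)\geq 0$ for every $\tau\in[a,t_0]$ because $f$ attains its maximum at $t_0$. For the kernel derivative I would differentiate the Mittag-Leffler series termwise: since $\psi$ is increasing, $(\psi(t_0)-\psi(\tau))^{\gamma}$ is non-increasing in $\tau$, so the argument
\begin{equation*}
z(\tau)=\frac{-\alpha(t_0)\bigl(\psi(t_0)-\psi(\tau)\bigr)^{\gamma}}{1-\alpha(t_0)}\leq 0
\end{equation*}
is non-decreasing in $\tau$. On the interval $(-\infty,0]$, $\mathbb{E}_{\beta}$ is non-decreasing (its derivative, obtained by differentiating the defining series, is positive there, as a consequence of the complete monotonicity of $x\mapsto\mathbb{E}_{\beta}(-x)$ for $0<\beta<1$). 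Combining these two facts yields $\partial_{\tau}\mathbb{H}_{\gamma,\beta}^{\alpha(t_0);\psi}(t_0,\tau)\geq 0$, and hence the integrand above is non-negative.

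Putting the pieces together and multiplying through by $\dfrac{M(\alpha(t_0))}{1-\alpha(t_0)}>0$, I obtain
\begin{equation*}
{}^{C}\mathfrak{D}_{a+}^{\alpha(t_0);\psi}f(t_0)\geq \frac{M(\alpha(t_0))}{1-\alpha(t_0)}\,\mathbb{H}_{\gamma,\beta}^{\alpha(t_0);\psi}(t_0,a)\bigl[f(t_0)-f(a)\bigr],
\end{equation*}
which gives the first inequality (with the paper's convention $\beta\to\gamma$ absorbed into the shorthand $\mathbb{H}_{\gamma}^{\alpha(t);\psi}$). The second inequality is immediate: $M(\alpha(t_0))>0$, $1-\alpha(t_0)>0$, the kernel $\mathbb{H}$ is non-negative (being a Mittag-Leffler of a non-positive argument), and $f(t_0)\geq f(a)$ by the maximum property.

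The main obstacle I anticipate is the monotonicity/positivity of $\partial_{\tau}\mathbb{H}$: one must either appeal to the complete monotonicity of $\mathbb{E}_{\beta}(-x)$ for $0<\beta<1$, or justify termwise differentiation of the series and verify that all the resulting terms have the right sign. Everything else is routine, provided the regularity $f\in H^{1}(a,b)$ is strong enough to legitimate the integration by parts.
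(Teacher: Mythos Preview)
Your argument is correct and follows essentially the same route as the paper: integrate by parts so that the boundary term at $\tau=t_0$ vanishes (the paper does this via the auxiliary function $g(t)=f(t_0)-f(t)$, which amounts to your substitution $f'(\tau)=\tfrac{d}{d\tau}[f(\tau)-f(t_0)]$), and then discard the remaining non-negative integral by showing $\partial_\tau\mathbb{H}\ge 0$. The only cosmetic difference is in justifying that last positivity: where you invoke the complete monotonicity of $x\mapsto\mathbb{E}_\gamma(-x)$ for $0<\gamma<1$, the paper uses the explicit Laplace-type representation $\mathbb{E}_\gamma(-t^\gamma)=\int_0^\infty e^{-rt}K_\gamma(r)\,dr$ with $K_\gamma>0$ and differentiates under the integral sign---by Bernstein's theorem these are two phrasings of the same fact.
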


\begin{proof} We define the auxiliary function $g\left( t\right) =f\left( t_{0}\right) -f\left( t\right) ,$ $t\in \left[ a,b\right] .$ Then it follows that $g\left( t\right) \geq 0,$ on $\left[ a,b\right] ,$ $g\left( t_{0}\right) =g^{\prime }\left( t_{0}\right) =0$ and $^{C}\mathfrak{D}_{a+}^{\alpha \left( t_{0}\right) ;\psi }g\left( t\right) =-$ $^{C}\mathfrak{D}_{a+}^{\alpha \left( t_{0}\right) ;\psi }f\left( t\right) .$ Since $g\in H^{1}\left( a,b\right)$, $g^{\prime }$ is integrable and integrating by parts with $u=\mathbb{H}_{\gamma }^{\alpha \left( t_{0}\right) ;\psi }\left( t_{0},\tau \right) $ and $dv=g^{\prime }\left( \tau \right) d\tau $ yields 
\begin{eqnarray}\label{eq15}
&&^{C}\mathfrak{D}_{a+}^{\alpha \left( t_{0}\right) ;\psi }g\left( t_{0}\right) \notag\\ &=&\frac{M\left( \alpha \left( t_{0}\right) \right) }{1-\alpha \left( t_{0}\right) }\int_{a}^{t_{0}}\mathbb{H}_{\gamma }^{\alpha \left( t_{0}\right) ;\psi }\left(
t_{0},\tau \right) g^{\prime }\left( \tau \right) d\tau \notag  \\
&=&\frac{M\left( \alpha \left( t_{0}\right) \right) }{1-\alpha \left( t_{0}\right) }\left\{ -\mathbb{H}_{\gamma }^{\alpha \left( t_{0}\right) ;\psi }\left( t_{0},a\right) g\left( a\right) -\int_{a}^{t_{0}}\frac{d}{d\tau }\mathbb{H}_{\gamma }^{\alpha \left( t_{0}\right) ;\psi }\left( t_{0},\tau \right) g\left( \tau \right) d\tau \right\}. \notag\\ 
\end{eqnarray}
We recall that, for $0<\gamma <1,$ we have
\begin{equation*}
\mathbb{E}_{\gamma }\left( -t^{\gamma }\right) =\int_{0}^{\infty }e^{-rt}K_{\gamma }\left( r\right) dr,
\end{equation*}
where
\begin{equation*}
K_{\gamma }\left( r\right) =\frac{1}{\pi }\frac{r^{\gamma -1}sen\left( \gamma \pi \right) }{r^{2\gamma }+2r^{\gamma }\cos \left( \gamma \pi \right) +1}>0.
\end{equation*}

Note that, we can change the parameter $t$ inside the integral, since it acts as a constant, so we have
\begin{equation*}
\mathbb{E}_{\gamma }\left( -\psi \left( t\right) ^{\gamma }\right) =\int_{0}^{\infty }e^{-r\psi(t)}K_{\gamma }\left( r\right) dr.
\end{equation*}

Also, the parameter $\psi \left( t\right) $ doesn't influence the values of $K_{\gamma}(r)$. Thus,
\begin{eqnarray}\label{eq16}
&&\frac{d}{d\tau }\mathbb{H}_{\gamma }^{\alpha \left( t_{0}\right) ;\psi }\left( t_{0},\tau \right)\notag  \\ 
&=&\frac{d}{d\tau }\mathbb{E}_{\gamma }\left[ \left( \frac{\alpha \left( t_{0}\right) ^{1/\gamma }\left( \psi \left( t_{0}\right) -\psi \left(
\tau \right) \right) }{\left[ 1-\alpha \left( t\right) \right] ^{1/\gamma }} \right) ^{\gamma }\right]   \notag \\
&=&\int_{0}^{\infty }\frac{d}{d\tau }\exp \left[ -r\left( \frac{\alpha \left( t_{0}\right) }{1-\alpha \left( t\right) }\right) ^{1/\gamma }\left( \psi \left( t_{0}\right) -\psi \left( \tau \right) \right) \right] K_{\gamma
}\left( r\right) dr  \notag \\ 
&=&\left( \frac{\alpha \left( t_{0}\right) }{1-\alpha \left( t\right) } \right) ^{1/\gamma }\psi ^{\prime }\left( \tau \right) \int_{0}^{\infty }r\exp \left[ -r\left( \frac{\alpha \left( t_{0}\right) }{1-\alpha \left(
t\right) }\right) ^{1/\gamma }\left( \psi \left( t_{0}\right) -\psi \left( \tau \right) \right) \right] K_{\gamma }\left( r\right) dr>0.\notag\\
\end{eqnarray}

As $g\left( t\right) >0$ and Eq.(\ref{eq16}), we have that Eq.(\ref{eq15}) is nonnegative. We recall that $\mathbb{E}_{\gamma }\left( \psi \left( t\right) \right) >0,$ $0<\gamma <1,$ and thus
\begin{eqnarray*}
^{C}\mathfrak{D}_{a+}^{\alpha \left( t_{0}\right) ;\psi }g\left( t_{0}\right)  &\leq & \frac{M\left( \alpha \left( t_{0}\right) \right) }{1-\alpha \left( t_{0}\right) }\left[ -\mathbb{H}_{\gamma }^{\alpha \left( t_{0}\right) ;\psi }\left(
t_{0},a\right) g\left( a\right) \right]  \\
&=&-\frac{M\left( \alpha \left( t_{0}\right) \right) }{1-\alpha \left( t_{0}\right) }-\mathbb{H}_{\gamma }^{\alpha \left( t_{0}\right) ;\psi }\left( t_{0},a\right) \left[ f\left( t_{0}\right) -f\left( a\right) \right] \leq 0.
\end{eqnarray*}

Note that, $g\left( t\right) =f\left( t_{0}\right) -f\left( t\right)$ implies $\ -f\left( t_{0}\right) \leq g\left( t\right) \leq f\left( t_{0}\right) ,$ then
\begin{equation*}
^{C}\mathfrak{D}_{a+}^{\alpha \left( t_{0}\right) ;\psi }g\left( t_{0}\right) \geq \frac{M\left( \alpha \left( t_{0}\right) \right) }{1-\alpha \left( t_{0}\right) }\mathbb{H}_{\gamma }^{\alpha \left( t_{0}\right) ;\psi }\left( t_{0},a\right) \left[ f\left( t_{0}\right) -f\left( a\right) \right] \geq 0,
\end{equation*}
which proves the result.
\end{proof}

\begin{lemma}\label{lema2} Let a function $f\in H^{1}\left( a,b\right) $ then it holds that 
\begin{equation*}
^{C}\mathfrak{D}_{a+}^{\alpha \left( t\right) ;\psi }f\left( a\right) =0,\text{ }0<\alpha \left( t\right) <1.
\end{equation*}
\end{lemma}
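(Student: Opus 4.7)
The approach here is essentially a direct evaluation of the defining integral, so I do not expect any serious obstacle. The strategy is to substitute $t = a$ into the formula Eq.(\ref{eq5}) for the Caputo-type operator and observe that the integration interval collapses to a single point. Concretely, I would write
\[
^{C}\mathfrak{D}_{a+}^{\alpha(a);\psi}f(a) = \frac{M(\alpha(a))}{1-\alpha(a)}\int_{a}^{a}\mathbb{H}_{\gamma,\beta}^{\alpha(a);\psi}(a,\tau)\,f'(\tau)\,d\tau,
\]
and then note that the integral equals zero because its upper and lower limits coincide.

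For this to be a genuine proof rather than a formal manipulation, I would briefly justify two finiteness points. First, the prefactor $M(\alpha(a))/(1-\alpha(a))$ is finite: by hypothesis $0<\alpha(a)<1$, so $1-\alpha(a)>0$, and the normalization function $M$ is bounded on $[0,1]$ with $M(0)=M(1)=1$. Second, the integrand is well-behaved near $\tau=a$: since $f\in H^{1}(a,b)$, the derivative $f'$ is integrable on $[a,b]$, and from the series representation Eq.(\ref{eq1}) one reads off that $0<\mathbb{H}_{\gamma,\beta}^{\alpha(a);\psi}(a,\tau)\leq 1$ (indeed, the argument of $\mathbb{E}_\beta$ at $t=a$ is $0$, so $\mathbb{H}_{\gamma,\beta}^{\alpha(a);\psi}(a,a)=\mathbb{E}_\beta(0)=1$; for $\tau<a$ one would have a negative argument). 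Thus the integrand is integrable and the integral over the degenerate interval $[a,a]$ is $0$.

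Combining the two observations, the product is zero, which gives
\[
^{C}\mathfrak{D}_{a+}^{\alpha(t);\psi}f(a) = \frac{M(\alpha(a))}{1-\alpha(a)}\cdot 0 = 0,
\]
as claimed. The only conceptual remark worth inserting is that, unlike the singular-kernel $\psi$-Caputo derivative (where one still needs the boundedness of the kernel near $\tau=t$ to make sense of the value at $t=a$), here the non-singular, bounded Mittag-Leffler kernel makes this boundary value completely transparent. There is no hard step; this lemma serves mostly as a consistency check and as preparation for comparison-principle arguments.
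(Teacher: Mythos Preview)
Your argument is correct and in fact more direct than the paper's. The paper applies the Cauchy--Schwarz inequality to obtain
\[
\bigl|{}^{C}\mathfrak{D}_{a+}^{\alpha(t);\psi}f(t)\bigr|^{2}\;\le\;\frac{M^{2}(\alpha(t))}{(1-\alpha(t))^{2}}\int_{a}^{t}\bigl[\mathbb{H}_{\gamma,\beta}^{\alpha(t);\psi}(t,\tau)\bigr]^{2}\,d\tau\;\int_{a}^{t}\bigl[f'(\tau)\bigr]^{2}\,d\tau,
\]
and then uses that $f'\in L^{2}(a,b)$ forces the last factor to vanish at $t=a$ while the kernel factor stays bounded. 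You bypass Cauchy--Schwarz entirely: since the non-singular kernel is bounded by $1$ (the same fact already used in Theorem~1) and $f'\in L^{1}(a,b)$, the integrand is integrable and the integral over the degenerate interval $[a,a]$ is zero. Both routes are valid; the paper's choice is tailored to the $H^{1}$ hypothesis and also yields continuity of the fractional derivative as $t\to a^{+}$, whereas your argument is the leaner one for the pointwise claim. One minor quibble: your parenthetical about ``$\tau<a$'' is irrelevant, since in $\int_{a}^{t}$ one always has $a\le\tau\le t$; it does no harm to the proof, but you may wish to drop it.
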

\begin{proof} In fact, as $\mathbb{H}_{\gamma ,\beta }^{\alpha \left( t\right) ;\psi }\left( t,\tau \right) $ is continuous on $\left[ a,b\right] $, then it is on $L^{2}\left[ a,b\right] .$ Applying the Cauchy-Schwartz inequality we have
\begin{eqnarray}\label{eq17}
\left\vert ^{C}\mathfrak{D}_{a+}^{\alpha \left( t\right) ;\psi }f\left( t\right) \right\vert ^{2} &=&\left\vert \frac{M\left( \alpha \left( t\right) \right)  }{1-\alpha \left( t\right) }\int_{a}^{t}\mathbb{H}_{\gamma ,\beta }^{\alpha \left(
t\right) ;\psi }\left( t,\tau \right) f^{\prime }\left( \tau \right) d\tau \right\vert ^{2} \notag\\
&\leq &\frac{M^{2}\left( \alpha \left( t\right) \right) }{\left[ 1-\alpha \left( t\right) \right] ^{2}}\int_{a}^{t}\left[ \mathbb{H}_{\gamma ,\beta }^{\alpha \left( t\right) ;\psi }\left( t,\tau \right) \right] ^{2}d\tau \int_{a}^{t}\left[ f^{\prime }\left( \tau \right) \right] ^{2}d\tau .
\end{eqnarray}

Since $f\in H^{1}\left( a,b\right) $ then $f^{\prime }$ is square integrable and it holds $\int_{a}^{t}\left[ f^{\prime }\left( \tau \right) \right] ^{2}d\tau =0.$ On the other hand, the first integral in Eq.(\ref{eq17}) is bounded, which conclude the proof.
\end{proof}

\section{The nonlinear fractional differential equation} 
In this section, we display and discuss the comparison's principle, by means of a lemma. In this sense, we will present uniquenesses results for a nonlinear fractional differential equation and an estimate for its solution.

\begin{lemma}\label{lema3}\textnormal{(Comparison's Principle)} Let a function $u\in H^{1}\left( a,b\right) \cap C\left[ a,b\right] $, satisfying the fractional inequality 
\begin{equation}\label{eq18}
Q_{\alpha \left( t\right) }\left( u\right) =\text{ }^{C}\mathfrak{D}_{a+}^{\alpha \left( t\right) ;\psi }u\left( t\right) +q\left( t\right) u\left( t\right) \leq 0,\text{ }t>a,\text{ }0<\alpha \left( t\right) <1,
\end{equation}
where $q\left( t\right) \geq 0$ is continuous on $\left[ a,b\right] $ and $q\left( a\right) \neq 0.$ Then $u\left( t\right) \leq 0,$ $t\geq a$.
\end{lemma}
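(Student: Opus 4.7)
The plan is to argue by contradiction: assume that $u$ takes a positive value somewhere on $[a,b]$, and then show that the hypothesis $Q_{\alpha(t)}(u)\leq 0$ must fail at a point where $u$ attains its maximum. The ingredients are Lemma \ref{lema2}, which controls the behaviour of the derivative at $t=a$, and Lemma \ref{le1}, which provides the lower bound for the derivative at an interior maximum.

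First, I would establish that $u(a)\leq 0$. By Lemma \ref{lema2}, we have ${}^C\mathfrak{D}_{a+}^{\alpha(t);\psi}u(a)=0$. Using the continuity of $u$, $q$, and (by a dominated-convergence argument on Eq.(\ref{eq5})) of the Caputo-type fractional derivative as $t\to a^{+}$, one may pass to the limit in the inequality $Q_{\alpha(t)}(u)(t)\leq 0$, valid for $t>a$, to conclude $q(a)u(a)\leq 0$. Since $q(t)\geq 0$ and $q(a)\neq 0$, one has $q(a)>0$, so $u(a)\leq 0$.

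Next, suppose for contradiction that $u(\bar t)>0$ for some $\bar t\in(a,b]$. Since $u\in C[a,b]$, the function $u$ attains its maximum on the compact interval $[a,b]$ at some point $t_0$, and then $u(t_0)\geq u(\bar t)>0$. Combined with the first stage, $u(a)\leq 0<u(t_0)$, so in particular $t_0>a$ and $u(t_0)-u(a)>0$ strictly. Lemma \ref{le1} applied at $t_0$ then yields
\begin{equation*}
{}^C\mathfrak{D}_{a+}^{\alpha(t_0);\psi}u(t_0)\;\geq\;\frac{M(\alpha(t_0))}{1-\alpha(t_0)}\,\mathbb{H}_{\gamma}^{\alpha(t_0);\psi}(t_0,a)\,[u(t_0)-u(a)]\;>\;0,
\end{equation*}
because the prefactor is strictly positive, $\mathbb{H}_{\gamma}^{\alpha(t_0);\psi}(t_0,a)>0$ for $0<\gamma<1$ (positivity of the Mittag-Leffler function), and $u(t_0)-u(a)>0$. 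Together with $q(t_0)u(t_0)\geq 0$ this gives $Q_{\alpha(t_0)}(u)(t_0)>0$, contradicting the hypothesis. Hence no such $\bar t$ exists and $u(t)\leq 0$ for all $t\in[a,b]$.

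The main obstacle I anticipate is the justification of the boundary step, namely $\lim_{t\to a^{+}}{}^C\mathfrak{D}_{a+}^{\alpha(t);\psi}u(t)=0$, which transfers the inequality on $(a,b]$ to the endpoint and forces $u(a)\leq 0$. This rests on the same Cauchy-Schwartz estimate used in Lemma \ref{lema2} (the integral of $(f')^{2}$ over $[a,t]$ vanishes as $t\to a^{+}$ for $f\in H^{1}$) together with boundedness of the coefficient $M(\alpha(t))/(1-\alpha(t))$ and of the kernel $\mathbb{H}_{\gamma,\beta}^{\alpha(t);\psi}$ near $a$; once that technical continuity at the endpoint is verified, the contradiction via Lemma \ref{le1} is immediate.
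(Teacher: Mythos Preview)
Your proposal is correct and follows essentially the same approach as the paper: establish $u(a)\leq 0$ via Lemma~\ref{lema2} and continuity, then argue by contradiction using Lemma~\ref{le1} at an interior maximum to force $Q_{\alpha(t_0)}(u)(t_0)>0$. If anything, your treatment of the endpoint step ($\lim_{t\to a^{+}}{}^C\mathfrak{D}_{a+}^{\alpha(t);\psi}u(t)=0$ via the Cauchy--Schwartz estimate) is more explicit than the paper's, which simply appeals to ``continuity of the solution'' at that stage.
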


\begin{proof}In fact, for $u\in H^{1}\left( a,b\right) $ then by Lemma \ref{lema2} we have $^{C}\mathfrak{D}_{a+}^{\alpha \left( t\right) ;\psi }f\left( a\right) =0.$ By the continuity of the solution, the fractional inequality Eq.(\ref{eq18}), yields $ q(t)\leq 0,$ and hence $u(a)\leq 0$.

Assume, by contradiction, that the result is not true, because $u$ is continuous on $\left[ a,b\right] $ then $u$ attains absolute maximum at $t_{0}\geq a$ with $u\left( t_{0}\right) >0.$ Since $u(a)\leq 0$, then $\ t_{0}>a.$ Applying the result of Lemma \ref{le1}, we have 
\begin{equation*}
^{C}\mathfrak{D}_{a+}^{\alpha \left( t_{0}\right) ;\psi }u\left( t_{0}\right) \geq \frac{M\left( \alpha \left( t_{0}\right) \right) }{1-\alpha \left( t_{0}\right) }\mathbb{H}_{\gamma ,\beta }^{\alpha \left( t_{0}\right) ;\psi }\left( t_{0},\tau \right) \left[ u\left( t_{0}\right) -u\left( a\right) \right] >0.
\end{equation*}

We, then have
\begin{equation*}
^{C}\mathfrak{D}_{a+}^{\alpha \left( t_{0}\right) ;\psi }u\left( t_{0}\right) +q\left( t_{0}\right) u\left( t_{0}\right) \geq \text{ }^{C}\mathfrak{D}_{a+}^{\alpha \left( t_{0}\right) ;\psi }u\left( t_{0}\right) >0
\end{equation*}
which contradicts the fractional inequality Eq.(\ref{eq18}), which completes the proof. 
\end{proof}

The next two results, i.e., Lemma \ref{le4} refers to the unique solution of the non-linear differential equation, by means of the comparison's principle. As a consequence of Lemma \ref{le4}, we present Lemma \ref{le5}, which guarantees the limitation of the solution of the nonlinear differential equation.

Thus, the solution as follows involve the unique solution of a nonlinear fractional differential equation in which as an estimate for its solution is given.

\begin{lemma}\label{le4} Consider the nonlinear fractional differential equation
\begin{equation*}
^{C}\mathfrak{D}_{a+}^{\alpha \left( t\right) ;\psi }u\left( t\right) =f\left( t,u\right) ,\text{ }t>a,\text{ }0<\alpha \left( t\right) <1,
\end{equation*}
where $f\left( t,u\right) $ is a smooth function. If $f\left( t,u\right) $ is non-increasing with respect to $u$ then the above equation has a unique solution $u\in H^{1}\left( a,b\right)$.
\end{lemma}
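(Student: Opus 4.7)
The plan is to prove uniqueness by a standard subtraction argument, reducing the problem to the comparison principle (Lemma \ref{lema3}) already established. Suppose, for contradiction, that $u_1, u_2 \in H^{1}(a,b)$ are two solutions of the equation and set $w := u_1 - u_2$. By linearity of the operator $^{C}\mathfrak{D}_{a+}^{\alpha(t);\psi}$ (which is evident from definition Eq.(\ref{eq5})), we obtain
\begin{equation*}
^{C}\mathfrak{D}_{a+}^{\alpha(t);\psi} w(t) = f(t, u_1(t)) - f(t, u_2(t)), \qquad t > a.
\end{equation*}

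Since $f(t, \cdot)$ is smooth, the mean value theorem (or the fundamental theorem of calculus applied in the second variable) provides a nonnegative function
\begin{equation*}
q(t) := -\int_{0}^{1} \frac{\partial f}{\partial u}\bigl(t, \, s u_1(t) + (1-s) u_2(t)\bigr)\, ds \; \geq \; 0,
\end{equation*}
where nonnegativity follows from the hypothesis that $f(t,u)$ is non-increasing in $u$. Then $f(t, u_1) - f(t, u_2) = -q(t)(u_1 - u_2) = -q(t) w(t)$, so that
\begin{equation*}
Q_{\alpha(t)}(w) = \; ^{C}\mathfrak{D}_{a+}^{\alpha(t);\psi} w(t) + q(t) w(t) = 0 \leq 0, \qquad t > a.
\end{equation*}

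Now I would apply the Comparison Principle of Lemma \ref{lema3} to $w$, which yields $w(t) \leq 0$ on $[a,b]$, i.e. $u_1 \leq u_2$. Reversing the roles of $u_1$ and $u_2$ (working with $-w$ instead of $w$, which satisfies the same type of inequality by the same computation) gives $u_2 \leq u_1$, hence $u_1 \equiv u_2$, proving uniqueness. The regularity $w \in H^{1}(a,b) \cap C[a,b]$ required by Lemma \ref{lema3} is inherited from the assumption $u_1, u_2 \in H^{1}(a,b)$.

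The main obstacles I anticipate are largely verification issues rather than deep ones. First, one must check that the auxiliary function $q(t)$ obtained from the mean value representation is continuous on $[a,b]$ and satisfies $q(a) \neq 0$, as demanded in the hypotheses of Lemma \ref{lema3}; continuity follows from smoothness of $f$ and continuity of $u_1, u_2$, while the condition $q(a) \neq 0$ can be secured, e.g., by adding a harmless perturbation $q(t) + \varepsilon$ and then letting $\varepsilon \to 0^{+}$ if strict negativity of $\partial_u f$ at $a$ is not guaranteed. Second, the lemma as stated asserts existence as well as uniqueness, but only uniqueness can be extracted from the comparison principle; existence would presumably require a separate Picard-type iteration argument using the Lipschitz-type estimate Eq.(\ref{ea12}), which I would either invoke or note as outside the scope of this particular lemma.
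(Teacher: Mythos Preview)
Your proposal is correct and follows essentially the same route as the paper: subtract two putative solutions, use the mean value theorem to write the difference of the nonlinearities as $-q(t)w(t)$ with $q\geq 0$, and then apply the Comparison Principle (Lemma~\ref{lema3}) to both $w$ and $-w$. The only cosmetic difference is that the paper invokes the pointwise mean value theorem to obtain a single intermediate point $u^{*}$, whereas you use the integral form; your choice is in fact cleaner, since it yields a $q(t)$ that is automatically continuous. Your remarks about the hypothesis $q(a)\neq 0$ and about the absence of an existence argument are valid observations that the paper itself does not address.
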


\begin{proof} Assume that $u_{1},u_{2}\in H^{1}\left( a,b\right) $ be two solutions of the above equation and let $z=u_{1}-u_{2}$. Then it holds that 
\begin{equation*}
^{C}\mathfrak{D}_{a+}^{\alpha \left( t\right) ;\psi }z\left( t\right) =f\left(
t,u_{1}\right) -f\left( t,u_{2}\right).
\end{equation*}

Applying the mean value theorem we have
\begin{equation*}
f\left( t,u_{1}\right) -f\left( t,u_{2}\right) =\frac{\partial }{\partial u} f\left( u^{\ast }\right) z
\end{equation*}
which implies
\begin{equation}\label{eq19}
^{C}\mathfrak{D}_{a+}^{\alpha \left( t\right) ;\psi }z\left( t\right) -\frac{\partial }{\partial u}f\left( u^{\ast }\right) z\left( t\right) =0.
\end{equation}

Since $-\dfrac{\partial }{\partial u}f\left( u^{\ast }\right) >0,$ then $z\left( t\right) \leq 0,$ by virtue of Lemma \ref{lema3}. Also, Eq.(\ref{eq19}) holds for $-z$ and $-z\left( t\right) \leq 0,$ by virtue of Lemma \ref{lema3}. Thus, $z=0$ which proves that $u_{1}=u_{2}.$
\end{proof}

\begin{lemma}\label{le5} Consider the nonlinear fractional differential equation
\begin{equation}\label{eq20}
^{C}\mathfrak{D}_{a+}^{\alpha \left( t\right) ;\psi }u\left( t\right) =f\left( t,u\right) ,\text{ }t>a,\text{ }0<\alpha \left( t\right) <1
\end{equation}
where $f\left( t,u\right) $ is a smooth function. Assume that
\begin{equation*}
\lambda _{2}u+h_{2}\left( t\right) \leq f\left( t,u\right) \leq \lambda
_{1}u+h_{1}\left( t\right) ,
\end{equation*}
for all $t\in H^{1}\left( a,b\right) ,$ where $\lambda _{1},\lambda _{2}<0$. Let $v_{1}$ and $v_{2}$ be the solutions of
\begin{equation}\label{eq21}
^{C}\mathfrak{D}_{a+}^{\alpha \left( t\right) ;\psi }v_{1}\left( t\right) =\lambda_{1}v_{1}+h_{1}\left( t\right) ,\text{ }t>0,\text{ }0<\alpha \left( t\right)<1
\end{equation}
and
\begin{equation*}
^{C}\mathfrak{D}_{a+}^{\alpha \left( t\right) ;\psi }v_{2}\left( t\right) =\lambda _{2}v_{2}+h_{2}\left( t\right) ,\text{ }t>0,\text{ }0<\alpha \left( t\right) <1
\end{equation*}
respectively, then it holds that $v_{2}\left( t\right) \leq u\left( t\right) \leq v_{1}\left( t\right)$, $t\geq a$.
\end{lemma}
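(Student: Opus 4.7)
The plan is to reduce both inequalities to direct applications of the Comparison's Principle (Lemma \ref{lema3}) applied to the appropriate differences of solutions. Since the Caputo-type operator $^{C}\mathfrak{D}_{a+}^{\alpha(t);\psi}$ of Eq.(\ref{eq5}) is linear in its argument (the derivative acts linearly on $f'$), differences of solutions of the nonlinear equation and of the linear comparison equations will again be governed by a controllable fractional inequality.

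For the upper bound $u(t)\leq v_{1}(t)$, set $w_{1}(t)=u(t)-v_{1}(t)$. Subtracting Eq.(\ref{eq21}) from Eq.(\ref{eq20}) and using linearity yields
\begin{equation*}
^{C}\mathfrak{D}_{a+}^{\alpha(t);\psi}w_{1}(t) \;=\; f(t,u) - \lambda_{1}v_{1}(t) - h_{1}(t).
\end{equation*}
Invoking the hypothesis $f(t,u)\leq \lambda_{1}u+h_{1}(t)$ and rearranging gives
\begin{equation*}
^{C}\mathfrak{D}_{a+}^{\alpha(t);\psi}w_{1}(t) + (-\lambda_{1})\,w_{1}(t) \;\leq\; 0,
\end{equation*}
which exactly matches the structure of Eq.(\ref{eq18}) with $q(t)\equiv -\lambda_{1}$. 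Because $\lambda_{1}<0$, we have $q(t)>0$ constant, so in particular $q$ is continuous on $[a,b]$ and $q(a)\neq 0$. Lemma \ref{lema3} then delivers $w_{1}(t)\leq 0$, i.e.\ $u(t)\leq v_{1}(t)$ for $t\geq a$.

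The lower bound is obtained symmetrically: set $w_{2}(t)=v_{2}(t)-u(t)$, so that
\begin{equation*}
^{C}\mathfrak{D}_{a+}^{\alpha(t);\psi}w_{2}(t) \;=\; \lambda_{2}v_{2}(t) + h_{2}(t) - f(t,u).
\end{equation*}
The lower hypothesis $f(t,u)\geq \lambda_{2}u+h_{2}(t)$ together with $\lambda_{2}<0$ gives
\begin{equation*}
^{C}\mathfrak{D}_{a+}^{\alpha(t);\psi}w_{2}(t) + (-\lambda_{2})\,w_{2}(t) \;\leq\; 0,
\end{equation*}
and a second application of Lemma \ref{lema3} (with $q(t)\equiv -\lambda_{2}>0$) yields $w_{2}(t)\leq 0$, i.e.\ $v_{2}(t)\leq u(t)$. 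Concatenating the two inclusions produces $v_{2}(t)\leq u(t)\leq v_{1}(t)$, as claimed.

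The main obstacle is not any deep estimate but rather the verification of the regularity hypotheses needed to invoke Lemma \ref{lema3}: one must ensure that the auxiliary functions $w_{1},w_{2}$ inherit membership in $H^{1}(a,b)\cap C[a,b]$ from $u$ and $v_{1},v_{2}$, which follows because $H^{1}\cap C$ is a vector space and the given/comparison solutions all lie in it. A secondary bookkeeping point is confirming the sign conditions $-\lambda_{i}>0$ throughout, which is immediate from the standing assumption $\lambda_{1},\lambda_{2}<0$.
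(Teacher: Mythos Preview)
Your proof is correct and follows essentially the same route as the paper: form the difference $u-v_{1}$ (resp.\ $v_{2}-u$), use linearity of $^{C}\mathfrak{D}_{a+}^{\alpha(t);\psi}$ together with the bound $f(t,u)\leq \lambda_{1}u+h_{1}(t)$ to obtain a fractional inequality of the form of Eq.(\ref{eq18}) with $q(t)=-\lambda_{1}>0$, and conclude via the Comparison's Principle. The paper's proof is identical in substance, the only difference being that it cites Lemma~\ref{le4} at the final step, whereas you (correctly) invoke Lemma~\ref{lema3} directly; your added remarks on regularity and on verifying $q(a)\neq 0$ are also appropriate.
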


\begin{proof} We shall prove that $u\left( t\right) \leq v_{1}\left( t\right) $ and by applying analogous steps one can show that $v_{2}\left( t\right) \leq u\left( t\right)$. By subtracting Eq.(\ref{eq20}) from Eq.(\ref{eq21}) we have
\begin{equation*}
^{C}\mathfrak{D}_{a+}^{\alpha \left( t\right) ;\psi }u\left( t\right) -\text{ }^{C}\mathfrak{D}_{a+}^{\alpha \left( t\right) ;\psi }v_{1}\left( t\right) =f\left( t,u\right) -\lambda _{1}v_{1}-h_{1}\left( t\right) 
\end{equation*}
implies
\begin{eqnarray*}
^{C}\mathfrak{D}_{a+}^{\alpha \left( t\right) ;\psi }\left( u\left( t\right) -\text{ }v_{1}\left( t\right) \right)  &=&f\left( t,u\right) -\lambda _{1}v_{1}-h_{1}\left( t\right)   \notag \\ &\leq &\lambda _{1}\left( u-v_{1}\right).
\end{eqnarray*}
Let $z=u-v_{1},$ then it holds that,
\begin{equation*}
^{C}\mathfrak{D}_{a+}^{\alpha \left( t\right) ;\psi }z\left( t\right) -\text{ }\lambda _{1}z\left( t\right) \leq 0.
\end{equation*}
Since $\lambda _{1}<0,$ and $z\leq 0,$ by virtue of Lemma \ref{le4}, then $u\leq v_{1}$.
\end{proof}

\section{Concluding remarks}
We introduced two new non-singular kernel fractional derivative formulations, through the  $\psi-$Caputo and $\psi-$Riemann-Liouville fractional derivatives and the Riemann-Liouville fractional integral with respect to another function both with variable order. Here, it was possible to show that the new versions of fractional derivatives, besides admitting a class of fractional derivatives with non-singular kernel, from the choice of the function $\psi(\cdot)$ and the limit $\alpha(t)\rightarrow \alpha$ and $\alpha(t)\rightarrow 1$, it was also possible to present some important results involving uniformly convergent sequences of continuous functions and important results in the study of the limitation of solutions of fractional differential equations. On the other hand, we present a result of the comparison's principle, fundamental in the study of the nonlinear fractional differential equation presented in Section 4.

\bibliography{ref}
\bibliographystyle{plain}

\end{document}